\DeclareMathOperator{\HH}{H}
\DeclareMathOperator{\Spec}{Spec}
\DeclareMathOperator{\Br}{Br}
\DeclareMathOperator{\Pic}{Pic}
\DeclareMathOperator{\pr}{pr}
\DeclareMathOperator{\et}{\text{\it \'et}}
\newcommand{\Pro}{\mathbf {P}}
\newcommand{\Aff}{\mathbf {A}}
\newcommand{\ad}{ad}
\newtheorem{lem}{Lemma}
\newtheorem*{thm*}{Theorem}
\newtheorem{thm}{Theorem}
\newtheorem{prop}{Proposition}
\newtheorem*{cor*}{Corollary}
\newtheorem{defn}{Definition}
\newcommand \xra {\xrightarrow }
\newcommand \hra {\hookrightarrow }
\title{Grothendieck---Serre conjecture I: Appendix}
\thanks{A. Stavrova and N.Vavilov are supported by RFBR~09-01-00878; I.Panin is supported by the joint DFG--RFBR project
09-01-91333-NNIO-a}
\author{I. Panin}
\author{A. Stavrova}
\author{N.Vavilov}%\address{St. Petersburg State University, St. Petersburg, Russia}
\begin{document}
%\selectlanguage{russian}

\maketitle

\section{Artin's neighborhood}
In this section we prove Theorem 3.2, Proposition 3.3 and Proposition 3.4 of~\cite{PaSV}.

The following Bertini type theorem is an extension of Artin's
result~\cite[Exp.XI,Thm.2.1]{LNM305}.
\begin{thm}
\label{GeneralSection} Let $k$ be an infinite field, and let
$V\subset \Pro^n_k$ be a locally closed sub-scheme of pure
dimension $r$. Further, let $V^{\prime}\subset V$ be an open
sub-scheme
%consisting of all points $x\in V$
such that for each point $x \in V^{\prime}$ the scheme $V$ is
$k$-smooth at $x$. Finally, let $p_1,p_2,\dots,p_m\in\Pro^n_k$ be
a family of pair-wise distinct closed points.
%% with $p_i \neq p_j$ for $i \neq j$.
For a family $H_1(d),H_2(d),\dots,H_s(d)$, with $s\le r$,
%, where $s\leq r$, of
of hypersurfaces of degree $d$ containing all points $p_i$, $1\le i\le m$, set
$ Y=H_1(d)\cap H_2(d)\cap\dots\cap H_s(d)$.

Then there exists an integer $d$ depending on the family
$p_1,p_2,\dots,p_m$ such that if the family
$H_1(d),H_2(d),\dots, H_s(d)$ with $s\leq r$ is sufficiently
general, then $Y$ intersects $V$ transversally at each point of $Y \cap V^{\prime}$.
\par
If, moreover, $V$ is irreducible {\rm(}respectively, geometrically
irreducible{\rm)} and $s<r$ then for the same integer $d$ and for
a sufficiently general family $H_1(d),H_2(d),\dots,H_s(d)$ the
intersection $Y\cap V$ is irreducible {\rm(}respectively,
geometrically irreducible{\rm)}.

\end{thm}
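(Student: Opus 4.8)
The plan is to deduce both assertions from the single-hypersurface case by induction on $s$, working throughout with the linear system $|L_d|$, where $L_d := \Gamma(\Pro^n_k,\mathcal I(d))$ is the space of degree-$d$ forms vanishing at $p_1,\dots,p_m$ and $\mathcal I\subset\mathcal O_{\Pro^n_k}$ is the ideal sheaf of the reduced scheme $\{p_1,\dots,p_m\}$. The one positivity input I need is that for $d$ larger than a bound depending only on the family $\{p_i\}$, the system $|L_d|$ \emph{separates $1$-jets}: for each closed point $x$ the restriction $L_d\to\mathcal O_x/\mathfrak m_x^2$ is surjective when $x\notin\{p_i\}$, and surjects onto $\mathfrak m_x/\mathfrak m_x^2$ when $x=p_i$. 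Since $\{p_1,\dots,p_m\}$ is a fixed finite scheme, this is a Castelnuovo--Mumford regularity statement applied to the ideal of $\{p_i\}$ thickened by a first-order neighbourhood of $x$; the family of these schemes as $x$ varies has bounded degree, so a single $d$ works for all $x$ and, a fortiori, at every stage of the two inductions.

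For transversality, suppose inductively that $W:=H_1\cap\dots\cap H_{j-1}\cap V'$ is $k$-smooth of pure dimension $r-(j-1)\ge 1$ at each relevant point ($j=1$ being the hypothesis on $V'$). Form the incidence scheme
\[
I=\{(x,H)\in W\times|L_d| : x\in H,\ T_xW\subseteq T_xH\}.
\]
For fixed $x\in W$ the fibre of $\mathrm{pr}_1\colon I\to W$ is the linear subspace of $|L_d|$ cut out by the conditions $f(x)=0$ and $df|_{T_xW}=0$; by the jet-separation statement these $1+\dim_xW$ conditions are independent of one another and of the vanishing at the $p_i$, so the fibre has codimension $\dim_xW+1$. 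At a base point $x=p_i\in W$ the condition $x\in H$ is vacuous, so the fibre has codimension only $\dim_xW\ge1$; but such $x$ form a finite subset of $W$, so their contribution to $I$ lies over a $0$-dimensional base. Either way $\dim I\le\dim|L_d|-1$, whence $\mathrm{pr}_2\colon I\to|L_d|$ is not dominant and its image is a proper closed $k$-subscheme. As $k$ is infinite there is a $k$-rational $H_j$ outside that image, i.e. one meeting $W$ transversally at every geometric point; iterating $j=1,\dots,s$ with the one fixed $d$ gives the first assertion.

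For irreducibility (now $s<r$, so every partial section has dimension $\ge2$ until the last cut), assume inductively that $V_{j-1}:=H_1\cap\dots\cap H_{j-1}\cap V$ is irreducible (resp. geometrically irreducible) of dimension $r-(j-1)\ge2$. The base locus of $|L_d|$ is exactly $\{p_1,\dots,p_m\}$, so $|L_d|$ defines a morphism $\phi$ on $V_{j-1}$ minus the finitely many $p_i$ lying on it, and for $d\gg0$ the jet-separation makes $\phi$ a locally closed immersion there. Thus $\phi(V_{j-1}\setminus\{p_i\})$ is an irreducible (resp. geometrically irreducible) subvariety of projective space of dimension $\ge2$; being an embedding, $|L_d|$ is not composed with a pencil, so the classical Bertini irreducibility theorem (in the form valid over infinite, resp. over algebraically closed, fields) shows that a general $H_j\cap(V_{j-1}\setminus\{p_i\})$ is irreducible (resp. geometrically irreducible). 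By the transversality already established, $H_j$ meets $V_{j-1}$ smoothly of positive dimension near each base point $p_i$, so these finitely many points lie in the closure of the irreducible open part; hence $V_j=H_j\cap V_{j-1}$ is irreducible (resp. geometrically irreducible) of dimension $r-j$. Since at each step the transversality and the irreducibility conditions each hold off a proper closed subset of $|L_d|$, a common $k$-rational choice of $H_1,\dots,H_s$ exists, establishing both assertions with the same $d$.

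The main obstacle is the \emph{uniformity of $d$}: the transversality count demands that the $1$-jet conditions at $x$ remain independent of the vanishing at $p_1,\dots,p_m$ simultaneously for all $x$ and at every inductive stage, which is precisely the content of a regularity bound for the ideal of $\{p_i\}$ thickened at $x$, and controlling this bound uniformly as $x$ degenerates toward a $p_i$ is the technical heart. The second delicate point is the book-keeping at the forced base points $p_i$ in the irreducibility step, namely verifying that cutting through such a point does not split off a spurious component—this is exactly what the transversality/smoothness secured there rules out.
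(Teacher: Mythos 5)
Your proposal is essentially correct, and its core mechanism is the same as the paper's: for $d$ large (depending only on $p_1,\dots,p_m$) the linear system of degree-$d$ forms through the $p_i$ behaves like a very ample system away from the assigned base points while still separating tangent directions at the $p_i$ themselves, so one can run Bertini-type arguments off the base locus and then treat the base points separately. The difference is in execution. The paper makes the first half literal: after passing to $\bar k$ it blows up $\Pro^n_{\bar k}$ at the points, embeds the blow-up into a larger projective space by the very ample sheaf $\mathcal O(dH-E)$, identifies degree-$d$ hypersurfaces through the points with hyperplanes of the target, and then cites Artin's Bertini theorem (SGA4, Exp.~XI, Th.~2.1) as a black box for both transversality and (geometric) irreducibility of the image of $V'$ minus the base points; the base points are handled via the surjectivity of $H^0(\mathcal O(d))\to H^0(\mathcal O/I^2)$, which is exactly your jet-separation statement at the $p_i$. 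Note that the locally closed immersion you define by $|L_d|$ on the complement of the $p_i$ is precisely the paper's map $f$. You instead re-prove the transversality assertion from scratch by the incidence-variety dimension count and invoke classical Bertini irreducibility for the image under $|L_d|$; this buys a more self-contained argument at the cost of having to check the uniformity of $d$ by hand, which you correctly reduce to a regularity bound for the bounded-length schemes obtained by adjoining a first-order neighbourhood of a varying point $x$ to $\{p_1,\dots,p_m\}$.

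One small repair is needed in your irreducibility step. You justify that the base points $p_i$ lying on $V_{j-1}$ do not create spurious components by appealing to transversality and smoothness of the cut near $p_i$; but that presupposes $p_i\in V'$, which is not assumed. The correct (and simpler) reason is purely dimensional: provided $H_j$ contains no component of $V_{j-1}$ (guaranteed since $|L_d|$ is nonconstant on the irreducible $V_{j-1}$), every irreducible component of $H_j\cap V_{j-1}$ has dimension at least $\dim V_{j-1}-1\ge 1$, so none is supported in the finite set $\{p_i\}$, and each is therefore the closure of a component of $H_j\cap(V_{j-1}\setminus\{p_i\})$, which you have already shown to be irreducible.
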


\begin{proof}
Let $\bar k$ be an algebraic closure of $k$. Given a $k$-scheme $X$, we will write $\bar X$ for
$X \otimes_k {\bar k}$.
Let
$p = \coprod p_i$,
let $\bar p$ be the corresponding scheme over
$\bar k$, and let
$q_j$ ($j=1,2, \dots, s$) be all its closed points.

Firstly we prove the Theorem for the case of the field $\bar k$, the $\bar k$-schemes
$\bar V$ and $\bar V^{\prime}$ and the family of closed points
$q_j$ ($j=1,2, \dots, s$). Given that we will be able to choose a sufficiently
general hyperplanes $H_1(d),H_2(d),\dots, H_s(d)$ of the same degree $d$ which are defined over the field $k$
such that the family
$\bar H_1(d),\bar H_2(d),\dots, \bar H_s(d)$
solves our problem for the $\bar k$-schemes
$\bar V$ and $\bar V^{\prime}$ and the family of closed points
$q_j$ ($j=1,2, \dots, s$). Then, clearly, the family
$H_1(d),H_2(d),\dots, H_s(d)$
solves our problem for the $k$-schemes $V$ and $V^{\prime}$ and the family of points
$p_1,p_2,\dots,p_m$.

Now set $q:= \coprod q_i$ and let $H \subset \Pro^n_{\bar k}$ be a hyperplane. Let
$\pi: \tilde \Pro^n_{\bar k} \to \Pro^n_{\bar k}$
be the blow up of
$\Pro^n_{\bar k}$
at $q$ and let $E=\pi^{-1}(q)$ be the exceptional divisor on
$\tilde \Pro^n_{\bar k}$. By
\cite[Ch.II, Sect.7, Prop.7.10]{Ha}
there exists an integer $N >> 0$ such that for each $c > N$
the invertible sheaf
$\mathcal O(cH - E)$
on
$\pi: \tilde \Pro^n_{\bar k} \to \Pro^n_{\bar k}$
is very ample relatively to
$\Pro^n_{\bar k}$.
Take such an integer $c$ and set $d=c+1$.
It is easy to see that the sheaf
$\mathcal O(dH - E)$
is very ample on
$\tilde \Pro^n_{\bar k}$.
It follows that its global sections define a closed
embedding
$\tilde \Pro^n_{\bar k} \hra \Pro^r_{\bar k}$.
%Let
%$\tilde {\bar V}$
%be the proper inverse image of $\bar V$ with respect to $\pi$,
%that is
%$\tilde {\bar V}$
%is the closure of the
%$\pi^{-1}(\bar V - q)$
%in
%$\tilde \Pro^n_{\bar k}$
%regarded as a reduced scheme.
%We have the composition of the closed inclusion
%$\tilde {\bar V} \hra \tilde \Pro^n_{\bar k} \hra \Pro^r_{\bar k}$.
%Let $W:=E \cap \tilde {\bar V}$ (set theoretically).
%Clearly,
%$\pi|_{\tilde {\bar V}-W}: \tilde {\bar V}-W \to \bar V - q$
%is an isomorphism. In particular,
%$\tilde {\bar V}-W$
%is smooth. {\bf ZABYL pro $V^{\prime}$}.
%By Theorem
%\cite[Exp.XI, Thm. 2.1]{LNM305}
%a sufficiently general hyperplane of the space
%$\Pro^r_{\bar k}$
%crosses $\tilde {\bar V}-W$ transversally.
One has
\begin{equation*}
\renewcommand{\arraystretch}{1.2}
\begin{array}{rl}
H^0(\Pro^r_{\bar k}, \mathcal O(1))&=H^{0}(\tilde \Pro^n_{\bar k}, \mathcal O(dH - E))\\
&=H^0(\Pro^n_{\bar k}, \pi_*(\mathcal O(dH - E)))\\
&=H^0(\Pro^n_{\bar k}, \pi_*(\mathcal O(-E)) \otimes \mathcal O(dH))\\
&= H^{0}(\Pro^n_{\bar k}, I(d)),
\end{array}
\renewcommand{\arraystretch}{1}
\end{equation*}
where $I$ is the ideal sheaf defining $q$ in $\Pro^n_{\bar k}$.
These identifications of
$H^0(\Pro^r_{\bar k}, \mathcal O(1))$,
$H^{0}(\tilde \Pro^n_{\bar k}, \mathcal O(dH - E))$
and
$H^{0}(\Pro^n_{\bar k}, I(d))$
show that the set of homogeneous polynomials of degree $d$ vanishing at all points of $q$ defines
a locally closed embedding
$f: \Pro^n_{\bar k} - q \hra \Pro^r_{\bar k}$.
Let
$L_i$ be a hyperplane in $\Pro^r_{\bar k}$ corresponding to a degree $d$ hypersurface $H_i$
($i=1,2,\dots, s$)
via the identification of
$H^0(\Pro^r_{\bar k}, \mathcal O(1))$
and
$H^{0}(\Pro^n_{\bar k}, I(d))$.

Set
$\bar V^{\prime\prime}:= \bar V^{\prime}-(\bar V^{\prime} \cap q)$,
$\bar {\bar V}^{\prime\prime}:= f(\bar V^{\prime\prime})$.
Then
$\bar V^{\prime\prime}$
is isomorphic to
$\bar {\bar V}^{\prime\prime}$
and the construction of $f$ implies the following:
if
$L_1 \cap L_2 \cap \dots \cap L_s$ intersects
$\bar {\bar V}^{\prime\prime}$
transversally, then
$Y=H_1 \cap H_2 \dots \cap H_s$
intersects
$\bar V^{\prime\prime}$
transversally.
Let
$\bar {\bar V}$
be the closure of
$\bar {\bar V}^{\prime\prime}$
in
$\Pro^r_{\bar k}$.
Now the item (i) of Theorem
\cite[Exp.XI, Thm. 2.1]{LNM305}
applied to the pair
$\bar {\bar V}^{\prime\prime} \subset \bar {\bar V}$
implies our theorem with
$\bar V^{\prime}$
replaced by
$\bar V^{\prime\prime}$.
It remains to consider points from
$\bar V^{\prime} \cap q$.
At these points $\bar V$ is $\bar k$-smooth.

%that among those hypersurfaces of degree $d$ in $\Pro^n_{\bar k}$ which contain $q$
%each sufficiently general intersect $\bar V-q$ transversally.
If we replace the integer $d$ by a larger one, our result is still valid with
$\bar V^{\prime}$ replaced by
$\bar V^{\prime\prime}$.
%It remains to note that replacing $d$ with a lager one we preserve the latter property and
At the same time, enlarging $d$ we may achieve the following:
among those hypersurfaces of degree $d$ in $\Pro^n_{\bar k}$ that contain $q$,
each one that is sufficiently general, intersects $\bar V$ transversally at every point of $\bar V \cap q$.
This can be easily deduced from the fact that, for a sufficiently
large $d$, the sheaf sequence
$$0 \to I^2(d) \to \mathcal O(d) \to (\mathcal O/I^2)(d)= \mathcal O/I^2 \to 0$$
induces an exact sequence of global sections
$$0 \to H^0(\Pro^n_{\bar k}, I^2(d)) \to H^0(\Pro^n_{\bar k}, \mathcal O(d)) \to H^0(\Pro^n_{\bar k}, \mathcal O/I^2) \to 0.$$

\end{proof}

Next we extend a result of Artin from
\cite{LNM305} concerning existence of nice neighborhoods.
The following notion is due to Artin~\cite[Exp. XI, D\'ef. 3.1]{LNM305}.
\begin{defn}
\label{DefnElemFib} An elementary fibration is a morphism of
schemes $p:X\to S$ that can be included in a commutative diagram
\begin{equation}
\label{SquareDiagram}
    \xymatrix{
     X\ar[drr]_{p}\ar[rr]^{j}&&
\overline X\ar[d]_{\overline p}&&Y\ar[ll]_{i}\ar[lld]_{q} &\\
     && S  &\\    }
\end{equation}
of morphisms satisfying the following conditions:
\begin{itemize}
\item[{\rm(i)}] $j$ is an open immersion dense at each fibre of
$\overline p$, and $X=\overline X-Y$; \item[{\rm(ii)}] $\overline
p$ is smooth projective all of whose fibres are geometrically
irreducible of dimension one; \item[{\rm(iii)}] $q$ is finite
\'{e}tale all of whose fibres are non-empty.
\end{itemize}
\end{defn}

%The following Bertini type theorem is an extension of Artin's
%result \cite[Exp.XI,Thm.2.1]{LNM305}
Using Theorem \ref{GeneralSection}, one can prove the following result which is a
slight extension of Artin's result \cite[Exp.XI,Prop.3.3]{LNM305}.

\begin{prop}
\label{ArtinsNeighbor} Let $k$ be an infinite field, $X/k$ be a
smooth geometrically irreducible variety, $x_1,x_2,\dots,x_n\in X$
be closed points. Then there exists a Zariski open neighborhood
$X^0$ of the family $\{x_1,x_2,\dots,x_n\}$ and an elementary
fibration $p:X^0\to S$, where $S$ is an open sub-scheme of the
projective space $\Pro^{\dim X-1}$.
\par
If, moreover, $Z$ is a closed co-dimension one subvariety in $X$,
then one can choose $X^0$ and $p$ in such a way that $p|_{Z\bigcap
X^0}:Z\bigcap X^0\to S$ is finite surjective.
\end{prop}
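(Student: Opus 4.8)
The plan is to carry out Artin's construction of an elementary fibration by a generic linear projection, feeding in Theorem~\ref{GeneralSection} at every step where a Bertini-type transversality or irreducibility statement is required, so that the prescribed points $x_1,\dots,x_n$ (and, for the supplement, the divisor $Z$) are respected. Assume $X$ is quasi-projective and fix a locally closed embedding $X\hookrightarrow\Pro^N_k$; write $\bar X$ for the closure, of pure dimension $r=\dim X$, let $X_\infty=\bar X-X$ be the boundary, and let $\bar Z$ be the closure of $Z$. Record the closed locus $B=X_\infty\cup\bar X_{\mathrm{sing}}\cup\bar Z_{\mathrm{sing}}$, which has dimension $\le r-1$.

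To build the map to an $(r-1)$-dimensional base I would project in two stages, since a linear centre of the dimension required to send $\bar X$ directly to $\Pro^{r-1}_k$ must meet $\bar X$. First choose a general linear subspace of dimension $N-r-1$ disjoint from $\bar X$; the resulting projection $\phi\colon\bar X\to\Pro^r_k$ is finite surjective, and since $\bar Z\subset\bar X$ the restriction $\phi|_{\bar Z}$ is finite as well. Then compose with the projection $\Pro^r_k\dashrightarrow\Pro^{r-1}_k$ away from a general point $O$. Over the infinite field $k$ the centre and the point $O$ can be taken $k$-rational and sufficiently general; in particular $O$ is chosen so that $O\notin\phi(\bar Z)$, so that the finite indeterminacy set $\phi^{-1}(O)$ is disjoint from $\{x_1,\dots,x_n\}\cup B$, and so that no line through $O$ lies in $\phi(\bar Z)$. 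Discarding this finite indeterminacy locus and the boundary yields a morphism $p\colon X^0\to\Pro^{r-1}_k$ on a fibre-saturated neighbourhood $X^0=p^{-1}(S)$ of the $x_i$, whose fibres have pure dimension one. The fibre of $p$ over a general point is the intersection of $\bar X$ with a linear space $\Pi$ of codimension $r-1$; re-embedding $\Pro^N_k-\{x_i\}$ by the degree-$d$ forms through the $x_i$, exactly as in the proof of Theorem~\ref{GeneralSection}, realises these fibres as intersections with $s=r-1$ general hypersurfaces of a single degree $d$ passing through the prescribed points.

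Now I would apply Theorem~\ref{GeneralSection} with $V=\bar X$, $V'=X$ (which is smooth) and $s=r-1<r$: it yields an integer $d$ depending on the $x_i$ for which the general codimension-$(r-1)$ section is transversal along $X$ and, by the second assertion, geometrically irreducible. Hence the fibres of $p$ over an open $S\subset\Pro^{r-1}_k$ are smooth, one-dimensional and geometrically irreducible; passing to the closure of the graph inside $\bar X\times_k\Pro^{r-1}_k$ gives a projective model $\bar p\colon\bar X^0\to S$, and after shrinking $S$ to the open locus where $\bar p$ is smooth with geometrically irreducible one-dimensional fibres (which contains $p(x_i)$) condition~{\rm(ii)} holds, condition~{\rm(i)} being built into the construction. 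The boundary $Y=\bar X^0-X^0$ maps to $S$ by a finite morphism $q$; shrinking $S$ to the \'etale locus of $q$ gives~{\rm(iii)}. For the supplement, the choice $O\notin\phi(\bar Z)$ makes $\bar Z\to\Pro^{r-1}_k$ finite surjective, a second application of Theorem~\ref{GeneralSection} with $V=\bar Z$ (pure dimension $r-1$) provides transversality, and removing from $S$ the images of $\bar Z-Z$ and of $\bar Z\cap X_\infty$ (proper closed subsets, which a general projection keeps off the $p(x_i)$) ensures $\bar Z\cap p^{-1}(S)\subset X$, so that $p|_{Z\cap X^0}$ becomes finite surjective.

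The main obstacle I anticipate is not the existence of the projection but the simultaneous verification of the global axioms {\rm(ii)}--{\rm(iii)} over an honest open $S$ still containing the images $p(x_i)$: with a single degree $d$ and a single sufficiently general choice one must force the relative compactification $\bar p$ to be everywhere smooth with geometrically irreducible one-dimensional fibres and the boundary $Y\to S$ to be finite \'etale, not merely finite, while keeping $\{x_1,\dots,x_n\}\subset X^0$ and, in the last part, making $Z\cap X^0\to S$ proper. This is precisely where the strengthened form of Bertini in Theorem~\ref{GeneralSection} --- general hypersurfaces of one common degree through prescribed points, preserving transversality and irreducibility at once --- is indispensable, and where the descent from $\bar k$ to the infinite field $k$ already carried out inside that theorem allows all the general choices to be made $k$-rationally.
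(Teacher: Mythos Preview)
Your outline follows Artin's projection idea, as does the paper, but there are two concrete steps in the paper's argument that your proposal leaves out and that are not cosmetic.

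First, the paper does not work with the raw closure $\bar X\subset\Pro^N$. It \emph{normalizes} it, so that the singular locus $S_{\mathrm{sing}}\subset Y=\bar X-X$ has dimension $\le r-2$. This is what later makes the boundary tractable: when you finally check that $q:Y'\to\Pro^{r-1}$ is \'etale near the image of the $x_i$, you need $L$ to intersect $Y$ transversally, and for that you must at least know that $Y$ is generically smooth in codimension $0$ and that the bad locus can be avoided. Your ``closure of the graph in $\bar X\times\Pro^{r-1}$'' is not normalized and you say nothing about its boundary; shrinking $S$ to the \'etale locus of $q$ is only useful if you know that locus contains the images $p(x_i)$, which is exactly the point at issue.

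Second, and more seriously, your use of Theorem~\ref{GeneralSection} is misaligned with your construction. You first perform a \emph{general} linear projection (centre disjoint from $\bar X$, then a general point $O$), so the fibre over $p(x_i)$ is cut out by specific hyperplanes that happen to pass through $x_i$; these are not a general member of the family of degree-$d$ hypersurfaces through the $x_i$, and Theorem~\ref{GeneralSection} gives you nothing about that particular fibre. The paper proceeds the other way round: it Veronese re-embeds and then chooses the $r-1$ hyperplanes $H_1,\dots,H_{r-1}$ \emph{all passing through every $x_i$}, general among such. This has two effects you are missing: (a) all the $x_i$ map to the \emph{same} point $v\in\Pro^{r-1}$, so one only has to verify the elementary-fibration axioms over a single neighbourhood; (b) Theorem~\ref{GeneralSection} applies on the nose to say that $L=\cap H_i$ meets $\bar X$ and $Y$ transversally and that $\bar X\cap L$ is a smooth geometrically irreducible curve. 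Then an extra hyperplane $H_0$ (not through the $x_i$) supplies the centre of projection $C=H_0\cap\cdots\cap H_{r-1}$, and the paper blows up $C$ and analyzes $Y'=\epsilon^{-1}(Y)\sqcup\coprod D_i$ explicitly: the $D_i\cong\Pro^{r-1}_{k(p_i)}$ are \'etale over $\Pro^{r-1}_k$ because $C\cap\bar X$ is $k$-smooth, and $\epsilon^{-1}(Y)\to\Pro^{r-1}$ is \'etale near $v$ because $L$ meets $Y$ transversally. Your last paragraph correctly names this as ``the main obstacle'', but your construction as written does not set things up so that Theorem~\ref{GeneralSection} resolves it.
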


\begin{proof}
The proof literally follows the proof of the original Artin's result.
Shrinking $X$ may assume that $X \subset \Aff^r$ is affine and still contains the points
$x_1,x_2,\dots,x_n$. Set $x:= \coprod^n_{j=1}x_i$.
Let $X_0$ be the closure of $X$ in $\Pro^r$.
Let $\bar X$ is the normalization of $X_0$ and set $Y=\bar X - X$ with the induced reduced structure.
Let $S \subset \bar X$ be the closed subset of $\bar X$ consisting of all singular points. Then  one has
\begin{itemize}
\item[(i)]
$S \subset Y$,
\item[(ii)]
$\dim \bar X = \dim X = n$,
\item[(iii)]
$\dim Y = n-1$,
\item[(iv)]
$\dim S \leq n-2$.
\end{itemize}
Embed $\bar X$ in a projective space $\Pro^t$. Let $M$ be the restriction of
the invertible sheaf $\mathcal O_{\Pro^t}(t)$ to $\bar X$. Take the sheaf
$M^{\otimes d}$ with the integer $d$ from Theorem \ref{GeneralSection}.
Consider the embedding $\Pro^t \hra \Pro^N$ via the full linear system of hypersurfaces of degree $d$ in $\Pro^t$.
By Theorem \ref{GeneralSection}
there exist hyperplanes
$H_1, H_2, \dots, H_{n-1}$
on
$\Pro^N$
such that
$x \subset H_i$,
$L:= H_1 \cap H_2 \cap \dots \cap H_{n-1}$
has dimension
$N-n+1$
and intersects
$\bar X$ and $Y$ transversally.
The intersection $\bar X \cap L$ is a smooth geometrically irreducible curve  by
Theorem~\ref{GeneralSection}.
The intersection
$Y \cap L$
has dimension zero.
Choose one more hyperplane $H_0$ in $\Pro^N$ such that
$H_0$ intersect $\bar X \cap L$ transversally and
$H_0 \cap Y \cap L = \emptyset$.

Let $h_i$ be a linear form in $N+1$ variables $t_{\nu}$, $\nu=1,\ldots,N+1$, defining the hyperplane $H_i$ in $\Pro^N$.
Consider a rational map
$\Pro^N \xra{q} \Pro^{n-1}$
sending a point $[t_0:t_1: \dots: t_N]$ to the point $[u_0:u_1: \dots:u_{n-1}]$,
where $u_i= h_i(t_0,t_1, \dots, t_N)$. That morphism is a rational projection with the projector center
$C= H_0 \cap \dots \cap H_{n-1}$. Let $\epsilon: \Pro^{\prime} \to \Pro^N$ be the blow up of $\Pro^N$
with the center $C$. Then the diagram
\begin{equation}
    \xymatrix{
    & \Pro^{n-1}   & \\
    \Pro^N \ar[ru]^-{q} && \Pro^{\prime} \ar[ll]_-{\epsilon} \ar[lu]_-{\pi} \\
    }
\end{equation}
commutes as a diagram of rational morphisms and the arrows $\epsilon$ and $\pi$
are regular maps. Let
$\bar X^{\prime} \subset P^{\prime}$
be the proper preimage of $\bar X$, that is
the closure of
$\epsilon^{-1}((\bar X - (\bar X \cap C)))$.
By the hypotheses made $C$ intersects $\bar X$ transversally, and
the morphism $\bar X^{\prime} \to \bar X$
is the blow up at the $k$-smooth finite center $\bar X \cap C$.
The $k$-smoothness means that
the scheme
$\bar X \cap C$
consists of finitely many points, and their residue fields are finite separable field extensions of $k$.

Identify
$X^{\prime}:= X - (\bar X \cap C)$
with an open subscheme of
$\bar X^{\prime}$.
Let
$Y^{\prime}:= \bar X^{\prime} - X^{\prime}$
be the closed subscheme of $\bar X^{\prime}$ with the reduced subscheme structure.
We claim that on the diagram
\begin{equation}
\label{D1}
    \xymatrix{
    X^{\prime} \ar[rd]^-{} \ar[rd]_-{f^{\prime}} \ar[r]^-{j}& \bar X^{\prime} \ar[d]^-{\bar f^{\prime}}  & Y^{\prime} \ar[l]_-{i} \ar[ld]^-{g^{\prime}} \\
    & \Pro^{n-1}  &&  \\
    }
\end{equation}
there exists an open neighborhood $V$ of the point $v=f^{\prime}(x)$ such that the restriction of
the diagram (\ref{D1}) to $V$ satisfies the conditions $(i),(ii),(iii)$ of Definition \ref{DefnElemFib}.
If this claim is true, it completes the proof of the Proposition. We proceed to verify it.

The condition $(i)$ is trivial. To obtain $(ii)$, note that $\bar X \cap L$ is a $k$-smooth geometrically
irreducible curve by the hypotheses. It follows that the morphism
$((f^{\prime})^{-1}(v)) \to \bar X \cap L$
induced by $\epsilon$ is {\it biunivoque}.
Whence
$(f^{\prime})^{-1}(v))_{red} \to \bar X \cap L$
is bijective.
To check that
$\bar f^{\prime}$
is smooth over a neighborhood of $v$, it suffices by
a Lemma of Hironaka
\cite[Exp.II, 2.6]{SGA1}
to verify that $\bar f^{\prime}$ is smooth at a generic point of
$(\bar f^{\prime})^{-1}(v)$.
At that point $\bar X^{\prime}$ is isomorphic to $\bar X$ and the morphism is smooth, since
$L$ intersects $X$ transversally.

It remains to prove that $g^{\prime}$ is \'{e}tale over a neighborhood of $v$.
It is clear that each fibre of $g^{\prime}$ is non-empty since $\dim Y = n-1$.
One has
$\bar X \cap C = \coprod p_i$
scheme-theoretically, where $p_i$ are points on $\bar X$ such that their residue fields
are separable finite extensions of $k$.
One has
$Y^{\prime}= \epsilon^{-1}(Y) \coprod D_1 \coprod \dots, \coprod D_r$,
where
$D_i= \epsilon^{-1}_{\bar X}(p_i) \cong \Pro^{n-1}_{k(p_i)}$.
For each index $i$ the restriction of $g^{\prime}$ to
$\Pro^{n-1}_{k(p_i)}$
coincides with the
morphism
$\Pro^{n-1}_{k(p_i)} \to \Pro^{n-1}_k$
induced by the field extension
$k(p_i)/k$. All these field extensions are finite separable. Whence the morphism
$\Pro^{n-1}_{k(p_i)} \to \Pro^{n-1}_k$
is \'{e}tale.
Clearly, one has
$$q|_{Y} \circ \epsilon|_{\epsilon^{-1}(Y)}=g^{\prime}|_{\epsilon^{-1}(Y)}: \epsilon^{-1}(Y) \to \Pro^{n-1}$$
and
$\epsilon|_{\epsilon^{-1}(Y)}: \epsilon^{-1}(Y) \to Y$
is an isomorphism. The morphism
$q|_{Y}: Y \to \Pro^{n-1}$
is \'{e}tale over a neighborhood of $v$,
since $L$ intersects $Y$ transversally. Whence
$g^{\prime}$
is \'{e}tale over a neighborhood of $v$.

In addition, we may choose $H_0, H_1, \dots, H_{n-1}$ such that
$H_0 \cap H_1 \cap \dots \cap H_{n-1} \cap Z = \emptyset$.
In that case $Z \subset X^{\prime}$ and the morphism
$f^{\prime}|_Z: Z \to \Pro^{n-1}$
is finite surjective. As a consequence the morphism
$f^{\prime}|_{(f^{\prime})^{-1}(V) \cap Z}: (f^{\prime})^{-1}(V) \cap Z \to V$
is finite surjective too.

\end{proof}

\begin{prop}
\label{CartesianDiagram} Let $p: X \to S$ be an elementary
fibration. If $S$ is a regular semi-local scheme, then there
exists a commutative diagram of $S$-schemes
\begin{equation}
\label{RactangelDiagram}
    \xymatrix{
X\ar[rr]^{j}\ar[d]_{\pi}&&\overline X\ar[d]^{\overline \pi}&&
Y\ar[ll]_{i}\ar[d]_{}&\\
\Aff^1\times S\ar[rr]^{\text{\rm in}}&&\Pro^1\times S&&
\ar[ll]_{i}\{\infty\}\times S &\\  }
\end{equation}
\noindent
such that $\bar \pi$ is finite surjective,
$Y=\pi^{-1}(\{\infty\} \times S )$
set-theoretically, and the left hand side square is Cartesian. Here $j$ and
$i$ are the same as in Definition \ref{DefnElemFib}, while
$\pr_S \circ\pi=p$, where $\pr_S$ is the projection
$\Aff^1\times S\to S$.

In particular, $\pi:X\to\Aff^1\times S$ is a finite surjective
morphism of $S$-schemes, where $X$ and $\Aff^1\times S$ are
regarded as $S$-schemes via the morphism $p$ and the projection
$\pr_S$, respectively.
\end{prop}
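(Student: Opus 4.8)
The plan is to realize $\overline\pi$ as the morphism to $\Pro^1\times S$ attached to a suitable line bundle on $\overline X$ together with two global sections having no common zero, arranged so that the section cutting out $Y$ becomes the coordinate vanishing at $\infty$. Since $\overline X$ is smooth over the regular scheme $S$, it is regular, and $Y$, being of pure codimension one, is an effective Cartier divisor; write $\mathcal L=\mathcal O_{\overline X}(Y)$ and let $\sigma\in H^0(\overline X,\mathcal L)$ be the canonical section with $\operatorname{div}(\sigma)=Y$. Because $q$ has non-empty fibres, on every fibre $\overline X_s$ the divisor $Y_s$ is a non-empty effective divisor on a smooth geometrically irreducible projective curve, hence of positive degree; thus $\mathcal L$ restricts to an ample sheaf on each fibre and is therefore relatively ample for the projective morphism $\overline p$.

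Next I would produce a section that does not vanish anywhere along $Y$. By relative Serre vanishing, $R^1\overline p_*\mathcal L^{\otimes(n-1)}=0$ for all $n\gg 0$; fix such an $n$. Applying $\overline p_*$ to the exact sequence
\[0\to\mathcal L^{\otimes(n-1)}\xra{\cdot\sigma}\mathcal L^{\otimes n}\to\mathcal L^{\otimes n}|_Y\to 0\]
yields a surjection of quasi-coherent $\mathcal O_S$-modules $\overline p_*\mathcal L^{\otimes n}\twoheadrightarrow q_*(\mathcal L^{\otimes n}|_Y)$. As $S=\Spec R$ with $R$ semi-local, it is affine, so taking global sections preserves surjectivity: $H^0(\overline X,\mathcal L^{\otimes n})\twoheadrightarrow H^0(Y,\mathcal L^{\otimes n}|_Y)$. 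The morphism $q$ being finite, $Y$ is semi-local as well, whence $\Pic(Y)=0$ and $\mathcal L^{\otimes n}|_Y\cong\mathcal O_Y$; under this trivialization I choose $s_1\in H^0(\overline X,\mathcal L^{\otimes n})$ lifting the unit section $1$, so that $s_1$ is nowhere zero on $Y$.

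Then I set $s_0:=\sigma^{\otimes n}\in H^0(\overline X,\mathcal L^{\otimes n})$, whose zero locus is $Y$ set-theoretically. Since $s_0$ vanishes only on $Y$ and $s_1$ is a unit there, the sections $s_0,s_1$ have no common zero and define an $S$-morphism $\overline\pi=[\,s_0:s_1\,]\colon\overline X\to\Pro^1\times S$ with $\pr_S\circ\overline\pi=\overline p$, where $\infty=[0:1]$, so that $\overline\pi^{-1}(\{\infty\}\times S)=\{s_0=0\}=Y$ set-theoretically and $\overline\pi^{-1}(\Aff^1\times S)=X$. This makes the left square Cartesian and gives $\pi:=\overline\pi|_X\colon X\to\Aff^1\times S$ with $\pr_S\circ\pi=p$. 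Finally I would check finiteness: $\overline\pi$ is proper because $\overline X\to S$ is proper and $\Pro^1\times S\to S$ is separated; on each fibre $\overline X_s\to\Pro^1_{\kappa(s)}$ is given by a base-point-free pencil of positive degree, hence a non-constant, therefore finite and surjective, morphism of curves. Thus $\overline\pi$ is quasi-finite, proper and fibrewise surjective, so finite surjective, and restricting over $\Aff^1\times S$ shows the same for $\pi$.

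The main obstacle is the middle paragraph: securing a single global section of $\mathcal L^{\otimes n}$ that is a unit at every point of $Y$ simultaneously. This is precisely where the semi-local hypothesis on $S$ is indispensable, and it is used twice, first to make $H^0(S,-)$ exact on the relevant surjection of sheaves, and second, via the vanishing of $\Pic(Y)$ for the semi-local scheme $Y$, to guarantee that a nowhere-vanishing section of $\mathcal L^{\otimes n}|_Y$ exists to be lifted. The relative ampleness and Serre-vanishing inputs are then routine, once the positivity of $\mathcal L$ on fibres is read off from the non-emptiness of the fibres of $q$.
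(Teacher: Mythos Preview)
Your argument is correct and follows the same overall strategy as the paper: realize $\overline\pi$ as $[s_0:s_1]$ for two sections of $\mathcal O_{\overline X}(nY)$, with $s_0=\sigma^{\otimes n}$ cutting out $nY$ and $s_1$ nowhere zero along $Y$, then check finiteness fibrewise.

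The one genuine difference is in how the non-vanishing section is produced. The paper restricts to the union $\overline X_s$ of the \emph{closed fibres} of $\overline p$, invokes very ampleness and a Bertini-type argument on each curve $\overline X_{s_i}$ to find sections $t_{1,i}$ avoiding $Y_{s_i}$, lifts them via surjectivity of $H^0(\overline X,\mathcal O(nY))\to H^0(\overline X_s,\mathcal O(nY_s))$, and then observes that a closed subset of $Y$ missing all closed points must be empty. You instead restrict directly to $Y$, use that $Y$ is semi-local (being finite over the semi-local $S$) to get $\Pic(Y)=0$, trivialize $\mathcal L^{\otimes n}|_Y$, and lift the unit section via the surjection coming from $R^1\overline p_*\mathcal L^{\otimes(n-1)}=0$. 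Your route is shorter and avoids the Bertini input entirely; the paper's route has the mild advantage of making the base-change behaviour of $\overline p_*\mathcal O(nY)$ explicit, but this is not needed for the statement at hand.
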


\begin{proof}
To prove this Proposition it suffices to construct a finite surjective $S$-morphism
$\bar \pi: \overline X \to \Pro^1 \times S$
 such that
$Y=\pi^{-1}(\{\infty\} \times S )$
set-theoretically. To do that, we first note that, under the hypotheses of the Proposition,
$Y \subset \overline X$
is an effective Cartier divisor.
We will construct a desired $\bar \pi$ using two sections $t_0$ and $t_1$ of the sheaf
$\mathcal O(nY)$ for a sufficiently large $n$. Assume that $t_0$ and $t_1$ are
such that the vanishing locus of $t_0$ is $nY$ and the vanishing locus of $t_1$ does not intersect $Y$.
Then the pair $t_0,t_1$ defines a regular map
$\varphi:=[t_0:t_1]: \overline X \to \Pro^1$.
Set
$\bar \pi = (\varphi,\bar p): \overline X \to \Pro^1 \times S$.
Clearly,
$\bar \pi$ is an $S$-morphism of the $S$-schemes. It is a projective morphism
since both $S$-schemes are projective $S$-schemes. It is a quasi-finite surjective morphism.
In fact, for each point $s \in S$ the morphism $\bar \pi$ induces a non-constant morphism
$\overline X_s \to \Pro^1_{s}$
of two $k(s)$-smooth irreducible projective $k(s)$-curves.
Thus $\bar \pi$ is finite surjective as a quasi-finite projective morphism.
It remains to find an appropriate integer $n$ and two sections $t_0$ and $t_1$ with the above properties.

Firstly, for each point $s$ of the scheme $S$ set
$\overline X_s:=(\bar \pi)^{-1}(s)$
scheme-theoretically and note that
$\overline X_s$ is a
$k(s)$-smooth irreducible projective $k(s)$-curve. The morphism $\bar \pi$ is smooth.
In particular, it is flat. Whence the function
$s \mapsto \chi(\overline X_s, \mathcal O_{\overline X_s})$
is constant by Corollary~\cite[Ch.II, Sect.5, Cor.1]{Mu}. The latter means that the genus
$g(\overline X_s)$
is the same for all points $s \in S$. Set
$g = g(\overline X_s)$.

Assume that $n \geq 2g-1$, then
$h^0(\overline X_s, \mathcal O_{\overline X_s}(nY_s))=\chi(\overline X_s, \mathcal O_{\overline X_s}(nY_s))=n-g+1$.
Let
$\mathcal E_n:= \bar p_*(\mathcal O_{\overline X_s}(nY_s)))$.
By
Corollary
\cite[II, Sect.5, Cor.1]{Mu}
and Lemma
\cite[II, Sect.5, Lem.1]{Mu}
the sheaf $\mathcal E_n$ on $S$
is locally free of rank
$n-g+1$,
and for each point $s \in S$ one has
$\mathcal E_n \otimes_{\mathcal O_S} k(s) \cong H^0(\overline X_s, \mathcal O_{\overline X_s}(nY_s))$.

Let $s = \coprod s_i$, where $s_i$ are all closed points of the semi-local scheme $S$. Let
$k(s) = \prod k(s_i)$, where $k(s_i)$ is the residue field of the point $s_i$.
Consider a commutative diagram
\begin{equation}
\label{Diagram3}
    \xymatrix{
H^0(S, \mathcal E_n) \ar[rr]^{id}\ar[d]_{\alpha} && H^0(\overline X, \mathcal O_{\overline X}(nY)) \ar[d]^{\beta}& \\
\mathcal E_n \otimes_{\mathcal O_S} k(s) \ar[rr]^{can} && H^0(\overline X_s, \mathcal O_{\overline X_s}(nY_s)),      & \\  }
\end{equation}
where $\alpha$, $\beta$ and $can$ are the canonical homomorphisms.
As it was mentioned in the previous paragraph, the map $can$ is an isomorphism.
The map $\alpha$ is surjective, since $s$ is a closed subscheme of an affine scheme $S$.
Whence the map $\beta$ is surjective.

For each $s_i \in s$ the curve $X_{s_i}$ is a smooth geometrically irreducible curve of genus $g$.
Whence there exists an integer $n_0$ such that for each $n \geq n_0$ the sheaf
$\mathcal O_{\overline X_s}(nY_s)$
is very ample. By the Bertini type theorem
\cite[Exp.XI,Thm.2.1]{LNM305}
there exists for any $i$ a section
$t_{1,i}$ of $\mathcal O_{\overline X_{s_i}}(nY_{s_i})$ that does not vanish on $Y_{s_i}$.
By the surjectivity of $\beta$ we may choose a section
$t_1$ of $\mathcal O_{\overline X}(nY)$
such that
$\beta(t_1)|_{\overline X_{s_i}}=t_{1,i}$ for any $i$.
The vanishing locus of $t_1$ does not intersect $Y_s$, whence it does not intersect $Y$.
Clearly, $t_1$ is the desired section of
$\mathcal O_{\overline X}(nY)$. It remains to take for $t_0$ a section of $\mathcal O_{\overline X}(nY)$
with the vanishing locus $nY$.

\end{proof}

\section{Equating group schemes}
\label{EquatingGroups}

The following result is Proposition 9.1 of~\cite{PaSV}.
\begin{prop}
\label{PropEquatingGroups} Let $S$ be a regular semi-local
irreducible scheme and let $G_1,G_2$ be two semi-simple
simply connected $S$-group schemes which are twisted forms of each other. Further, let $T\subset S$ be a
closed subscheme of $S$ and $\varphi:G_1|_T\to G_2|_T$ be an
$S$-group scheme isomorphism. Then there exist a finite \'{e}tale
morphism $\tilde S\xra{\pi}S$ together with a morphism
$\delta:T\to\tilde S$ of schemes over $S$, and a $\tilde S$-group scheme
isomorphism $\Phi:\pi^*{G_1}\to\pi^*{G_2}$ such that
$\delta^*(\Phi)=\varphi$.
\end{prop}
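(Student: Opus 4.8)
The plan is to recast the whole statement as the problem of finding a finite \'etale multisection, through the given $T$-point, of the scheme of isomorphisms between $G_1$ and $G_2$, and then to produce such a multisection by the Bertini-type machinery of Theorem \ref{GeneralSection}. First I would introduce the affine $S$-scheme $Z := \underline{\mathrm{Isom}}_{S\text{-gr}}(G_1,G_2)$ representing, for every $S$-scheme $S'$, the set of $S'$-group-scheme isomorphisms $G_1|_{S'} \xra{\sim} G_2|_{S'}$. Since $G_1$ and $G_2$ are twisted forms of each other and the automorphism group scheme $\Aut(G_1)$ of a semisimple simply connected group is smooth and affine, $Z$ is a torsor under $\Aut(G_1)$ and is therefore a smooth affine surjective $S$-scheme. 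Its structure is governed by the exact sequence $1 \to G_1^{\ad} \to \Aut(G_1) \to \mathrm{Out}(G_1) \to 1$ with $\mathrm{Out}(G_1)$ finite \'etale: dividing by inner automorphisms gives a finite \'etale $S$-scheme $\bar Z$ (the scheme of exterior isomorphisms) together with a morphism $Z \to \bar Z$ that is a torsor under the adjoint group $G_1^{\ad}$, hence smooth with geometrically connected fibres. In this language the isomorphism $\varphi$ is nothing but a $T$-point $\varphi \in Z(T)$, i.e.\ a closed immersion $\varphi(T) \cong T \hra Z$ sitting over the inclusion $T \hra S$.

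Next I would reformulate the conclusion. To give a finite \'etale $\pi : \tilde S \to S$, a section $\delta : T \to \tilde S$ over $S$ and an isomorphism $\Phi : \pi^*G_1 \to \pi^*G_2$ with $\delta^*(\Phi) = \varphi$ is exactly to give a finite \'etale $S$-scheme $\tilde S$ equipped with a morphism $\sigma : \tilde S \to Z$ over $S$ (this is $\Phi$) and a morphism $\delta : T \to \tilde S$ over $S$ with $\sigma \circ \delta = \varphi$. Replacing $\tilde S$ by the scheme-theoretic image of $\sigma$, it suffices to produce a closed subscheme $\tilde S \subseteq Z$ that is finite \'etale over $S$ and contains $\varphi(T)$: then $\sigma$ is the inclusion, $\delta$ is $\varphi$ regarded as a map into $\tilde S$, and $\sigma \circ \delta = \varphi$ holds on the nose. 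Thus everything reduces to finding a finite \'etale multisection of the smooth affine morphism $Z \to S$ passing through the prescribed point $\varphi(T)$.

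Such a multisection I would cut out by general hyperplane sections, exactly as in Section~1. Choosing a relatively projective compactification $Z \hra P$ over $S$ and working fibre by fibre over the finitely many closed points of the semilocal scheme $S$, I would apply Theorem \ref{GeneralSection} to the smooth locus of the fibres to select a linear system of degree-$d$ hypersurfaces that first all pass through the finitely many points of $\varphi(T)$ lying over each closed point, and second are otherwise sufficiently general, so that their common intersection with the fibre is zero-dimensional, transversal at the smooth points, and disjoint from the boundary $P \setminus Z$ at infinity. The ``contains the prescribed points yet is otherwise general'' feature is precisely what Theorem \ref{GeneralSection} supplies, and the boundary-avoidance together with the passage to the compactification makes the resulting intersection finite over $S$, in the spirit of the $\Pro^1$-multisection construction of Proposition \ref{CartesianDiagram}. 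Transversality at the smooth points translates into \'etaleness over each closed fibre, and since a quasi-finite projective morphism whose non-\'etale locus (a closed set, its image finite in $S$) misses every closed point of a semilocal base is finite \'etale, this yields the required $\tilde S \hra Z$ through $\varphi(T)$.

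The main obstacle, and the place where the hypotheses are really used, is the simultaneous achievement of the three requirements on the cut: that it pass through $\varphi(T)$, that it be \'etale (not merely quasi-finite) over the closed fibres, and that it stay finite, i.e.\ that no component run off to infinity or acquire a non-reduced intersection at the base points. The first two are in tension, since forcing the hypersurfaces through $\varphi(T)$ constrains the linear system, and this is exactly why the base-point version of Bertini (Theorem \ref{GeneralSection}) rather than the classical one is needed; the third is controlled by the compactification-and-avoidance argument already used in Proposition \ref{CartesianDiagram}. A secondary subtlety is that the residue fields at the closed points of $S$ may be finite, so the infinite-field hypothesis of Theorem \ref{GeneralSection} must be arranged by a preliminary reduction; the semilocal and simply connected hypotheses guarantee, respectively, that \'etaleness over the closed fibres globalizes and that $\Aut(G_1)$ has the clean extension structure used to control the relative dimension of $Z$ in the dimension count underlying the Bertini step.
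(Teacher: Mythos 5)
Your reduction coincides with the paper's: $\varphi$ is a $T$-point of the smooth affine $S$-scheme $Isom_S(G_1,G_2)$ furnished by SGA3, and everything comes down to producing a closed subscheme $\tilde S\subset Isom_S(G_1,G_2)$ that is finite \'etale over $S$ and contains $\delta(T)$; the paper gets this from \cite[Lemma 7.2]{OjPa} (quoted as Lemma \ref{Lemma7_2}). The genuine gaps in your proposal are exactly at the two points you treat as routine. First, ``choosing a relatively projective compactification $Z\hra P$ over $S$'' is where most of Section \ref{EquatingGroups} is actually spent: the lemma needs $Z$ realized as $\bar X\cap\{Y_0\neq 0\}$ for a closed $\bar X\subset\Pro^N_S$ with $\dim(X_\infty(s))<\dim(X(s))$ on every closed fibre, and without that fibrewise boundary bound your ``dimension count underlying the Bertini step'' (cutting the boundary down to the empty set while keeping a nonempty finite cut of $Z(s)$) does not go through. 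For the split model $Aut\times S$ such a compactification is obtained by closing up $Aut\subset End_k(W)$ in $\Pro(k\oplus End)$, but $Isom_S(G_1,G_2)$ is only an \'etale twisted form of $Aut\times S$, so the paper must transport the compactification along the two-sided torsor construction $Y\mapsto{}_2Y_1$ (after identifying $Isom_S(G_1,G_2)\cong{}_2(Aut_S)_1$, Lemma \ref{IsomG_1G_2}) and then use semi-locality of $S$ to trivialize ${}_2End_1$ so that $\Pro(\mathcal O_S\oplus{}_2End_1)\cong\Pro^{n^2}_S$. Your sketch postulates a compactification with these properties rather than constructing one.

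Second, $\delta(T)$ is a closed subscheme of $Z$ isomorphic to $T$, and $T$ is an arbitrary closed subscheme of the semi-local $S$ --- it may be positive-dimensional and non-reduced, not a finite set of closed points. Hypersurfaces forced through ``the finitely many points of $\varphi(T)$ lying over each closed point'' give a multisection meeting $\delta(T)$ in those points, but not one containing all of $\delta(T)$, which is what the Proposition needs in order to have $\delta^*(\Phi)=\varphi$ as an identity of isomorphisms over the whole of $T$. Ensuring containment of the entire section is precisely the extra content of \cite[Lemma 7.2]{OjPa} beyond a naive Bertini cut, and your sketch does not address it. (The infinite-residue-field issue you flag for Theorem \ref{GeneralSection} is real as well, but secondary.) So the skeleton matches the paper's argument, while the two steps carrying the actual work --- the equivariant compactification of the twisted Isom scheme and the scheme-theoretic containment of $\delta(T)$ --- are missing.
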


Since the proof of the Proposition is rather long we first give an outline.
Clearly, $G_1$ and $G_2$ are of the same type.
By~\cite[Vol.153, Exp.XXIV, Cor.1.8]{SGA3} there exists an $S$-scheme $Isom_S(G_1,G_2)$ representing
the functor that sends an $S$-scheme $W$ to the set of
all $W$-group scheme isomorphisms from $W \times_S G_1$ to $W \times_S G_2$.
The isomorphism $\varphi$
from the hypothesis of Proposition
\ref{PropEquatingGroups}
determines a section
$\delta: T \to Isom_S(G_1,G_2)$
of the structure map
$Isom_S(G_1,G_2) \to S$.
By Lemmas \ref{tildeS}
and \ref{IsomG_1G_2} below
there exists a closed subscheme $\tilde S$ of
$Isom_S(G_1,G_2)$
which is finite \'{e}tale over $S$ and contains $\delta(T)$.
So, we have a commutative diagram of $S$-schemes
\begin{equation}
\label{D}
    \xymatrix{
    T \ar[rrd]_-{i} \ar[rr]^-{\delta}&& \tilde S \ar[rr]^-{} \ar[d]^-{\pi}  && Isom_S(G_1,G_2)  \ar[lld]^-{} \\
    && S  &&.  \\
    }
\end{equation}
such that the horizontal arrows are closed embeddings.
Thus we get an isomorphism
$\Phi: \pi^*(G_1) \to \pi^*(G_2)$
such that
$\delta^{*}(\Phi)=\varphi$.

The precise proof of the Proposition requires some auxiliary results and
will be postponed till the very end of the Section.
Clearly, $G_1$ and $G_2$ are of the same type. Let $G_{0}$ be a split
semi-simple
simply connected algebraic group over the ground field $k$ such that
$G_1$ and $G_2$ are twisted forms of the $S$-group scheme
$S \times_{Spec(k)} G_0$.
Let
$Aut_k(G_0)$
be the automorphism scheme of the algebraic $k$-group $G_0$.
It is known that
$Aut_k(G_0)$
is a semi-direct product of the algebraic $k$-group
$G^{\ad}_0$ and a finite group,
where
$G^{\ad}_0$ is a group adjoint to $G_0$.
Also, $Aut_k(G_0)$ is a smooth affine algebraic $k$-group (for example, by~\cite[Vol.153, Exp.XXIV, Cor.1.8]{SGA3}).
Set for short
$Aut:=Aut_k(G_0)$
and
$Aut_S$ for the $S$-group scheme
$S \times_{Spec(k)} Aut$.

Consider an $S$-scheme $Isom_S(G_{0,S},G_2)$ constructed in
\cite[Vol.153, Exp.XXIV, Cor.1.8]{SGA3}
and representing a functor that sends an $S$-scheme $W$ to the set of
all $W$-group scheme isomorphisms
$\varphi_2:W \times_S G_{0,S}\to W \times_S G_2$.
Similarly, consider
an $S$-scheme $Aut_S(G_2)$ constructed in
\cite[Vol.153, Exp.XXIV, Cor.1.8]{SGA3}
and representing a functor that sends an $S$-scheme $W$ to the set of
all $W$-group scheme automorphisms
$\alpha: W \times_S G_2 \to W \times_S G_2$.

The functor transformation
$(\varphi_2, \alpha_2) \mapsto \varphi_2 \circ \alpha^{-1}_2$
defines an $S$-scheme morphism
$$Isom_S(G_{0,S},G_2) \times_S Aut_S \to Isom_S(G_{0,S},G_2)$$
which makes the $S$-scheme
$Isom_S(G_{0,S},G_2)$
a principal right $Aut_S$-bundle.
The functor transformation
$(\beta_2, \varphi_2) \mapsto \beta_2 \circ \varphi_2$
defines an $S$-scheme morphism
$$Aut_S(G_2) \times_S Isom_S(G_{0,S},G_2) \to Isom_S(G_{0,S},G_2)$$
which makes the $S$-scheme
$Isom_S(G_{0,S},G_2)$
a principal left $G_2$-bundle.

Similarly,
the functor transformation
$(\alpha_1, \varphi_1) \mapsto \alpha_1 \circ \varphi_1$
makes the $S$-scheme
$Isom_S(G_1,G_{0,S})$
a principal left $Aut_S$-bundle and
the functor transformation
$( \varphi_1, \beta_1) \mapsto \varphi_1 \circ \beta_1$
makes the $S$-scheme
$Isom_S(G_1,G_{0,S})$
a principal right $G_1$-bundle.

Let $_{2}P_r$ be a left principal $G_2$-bundle and at the same time a right principal $Aut_S$-bundle
such that the two actions commute.
Let $_{l}P_1$ be a left principal $Aut_S$-bundle and at the same time a right principal $G_1$-bundle
such that the two actions commute.
Let $Y$ be a $k$-variety equipped with a left and a right $Aut_k$-actions which commute. Then the $k$-scheme
$$({_{2}P_r}) \times_S (Y_S) \times_S ({_{l}P_1 })$$
is equipped with a left $Aut_k \times Aut_k$-action given by
$(\alpha_2, \alpha_1)(p_2,y,p_1)=(p_2 \alpha^{-1}_2, \alpha_2 y \alpha^{-1}_1, \alpha_1 p_1)$.
The orbit space does exist (it can be constructed by descent). Denote it by $_{2}Y_1$.
We now show that it is an $S$-scheme. Indeed, the structure morphism
$Y \to Spec(k)$
defines a morphism
$$({_{2}P_r}) \times_S (Y_S) \times_S ({_{l}P_1 }) \to ({_{2}P_r}) \times_S ({_{l}P_1 })$$
respecting the $Aut_k \times Aut_k$-actions on both sides. Thus it defines a morphism of the orbit spaces
$$_{2}Y_1 \to (_{2}Spec(k)_1)=S.$$
The latter equality holds since
$({_{2}P_r}) \times_S ({_{l}P_1 })$
is a principal left $Aut \times Aut$-bundle with respect the left action given by
$(\alpha_2, \alpha_1)(p_2,p_1)=(p_2 \alpha^{-1}_2, \alpha_1 p_1)$.

The construction
$Y \mapsto \ _{2}Y_1$
has several nice properties. Namely,
\begin{itemize}
\item[(i)]
it is natural with respect to $k$-morphisms of $k$-varieties
$Y \to Y^{\prime}$
commuting with the given two-sided $Aut \times Aut $-actions on $Y$ and $Y^{\prime}$,
\item[(ii)]
it takes closed embeddings to closed embeddings,
\item[(iii)]
it takes open embeddings to open embeddings,
\item[(iv)]
it takes $k$-products to $S$-products,
\item[(v)]
locally in the \'{e}tale topology on $S$, the $S$-schemes
$Y_S$ and $_{2}Y_1$ are isomorphic.
\end{itemize}
Set
${_{2}P_r}= Isom_S(G_{0,S},G_2)$
and
${_{l}P_1}= Isom_S(G_1,G_{0,S})$.
The functor transformation
$(\varphi_2, \alpha, \varphi_1) \mapsto \varphi_2 \circ \alpha \circ \varphi_1$
gives a morphism of representable $S$-functors
$$Isom_S(G_{0,S},G_2) \times_S (Aut_S) \times_S Isom_S(G_1,G_{0,S}) \xra{\displaystyle \Phi} Isom_S(G_1,G_2).$$
The equality
$$\varphi_2 \circ \alpha \circ \varphi_1=
(\varphi_2 \circ \alpha^{-1}_2)\circ (\alpha_2 \circ \alpha \circ \alpha^{-1}_1) \circ (\alpha_1 \circ \varphi_1)$$
shows that the morphism $\Phi$ induces a morphism
$\bar \Phi: \ _{2}(Aut_S)_1 \to Isom_S(G_1,G_2)$.

\begin{lem}
\label{IsomG_1G_2}
The $S$-morphism
$$\bar \Phi: \  _{2}(Aut_S)_1 \to Isom_S(G_1,G_2)$$
is an isomorphism.
\end{lem}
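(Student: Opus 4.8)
The plan is to prove that $\bar\Phi$ is an isomorphism by faithfully flat descent on the base $S$. Since the property of being an isomorphism of $S$-schemes is local in the \'etale topology on $S$, it suffices to produce a surjective \'etale morphism $\pi\colon S'\to S$ for which the base change $\bar\Phi\times_S S'$ is an isomorphism. Because $G_1$ and $G_2$ are twisted forms of $G_{0,S}$, they are split by some surjective \'etale $\pi\colon S'\to S$; equivalently, the principal $Aut_S$-bundles ${_2P_r}=Isom_S(G_{0,S},G_2)$ and ${_lP_1}=Isom_S(G_1,G_{0,S})$ acquire sections over $S'$. I would fix such sections, i.e. $S'$-isomorphisms $\psi_2\colon G_{0,S'}\to G_2|_{S'}$ and $\psi_1\colon G_1|_{S'}\to G_{0,S'}$.

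These two sections trivialize both the source and the target of $\bar\Phi$ over $S'$. On the target, the rule $\phi\mapsto \psi_2^{-1}\circ\phi\circ\psi_1^{-1}$ gives an isomorphism $\Theta_{\mathrm{tgt}}\colon Isom_S(G_1,G_2)|_{S'}\to Aut_{S'}$; this is simply the standard trivialization of the $Aut_S$-torsor $Isom_S(G_1,G_2)$ induced by trivializing $G_1$ and $G_2$. On the source, since the torsors ${_2P_r}$ and ${_lP_1}$ are trivialized by $\psi_2$ and $\psi_1$, each $Aut_k\times Aut_k$-orbit in $({_2P_r}\times_S Aut_S\times_S {_lP_1})|_{S'}$ has a unique representative of the form $(\psi_2,\alpha,\psi_1)$. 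This yields an isomorphism $\Theta_{\mathrm{src}}\colon Aut_{S'}\to {_2(Aut_S)_1}|_{S'}$, $\alpha\mapsto [(\psi_2,\alpha,\psi_1)]$, which is exactly the \'etale-local trivialization furnished by property (v) applied to $Y=Aut$.

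Finally I would compute the composite $\Theta_{\mathrm{tgt}}\circ(\bar\Phi\times_S S')\circ\Theta_{\mathrm{src}}$. From the defining formula $\bar\Phi([(\varphi_2,\alpha,\varphi_1)])=\varphi_2\circ\alpha\circ\varphi_1$ one has $\bar\Phi([(\psi_2,\alpha,\psi_1)])=\psi_2\circ\alpha\circ\psi_1$, and therefore $\Theta_{\mathrm{tgt}}(\psi_2\circ\alpha\circ\psi_1)=\psi_2^{-1}\circ\psi_2\circ\alpha\circ\psi_1\circ\psi_1^{-1}=\alpha$. Thus the composite is the identity of $Aut_{S'}$, so $\bar\Phi\times_S S'$ is an isomorphism, and by descent $\bar\Phi$ is an isomorphism. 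The step requiring genuine care --- the main obstacle --- is the verification that $\Theta_{\mathrm{src}}$ is well defined and an isomorphism, that is, that the fixed sections really do single out a unique orbit representative $(\psi_2,\alpha,\psi_1)$; this is precisely the content of property (v), and one must check that this section-based trivialization of the quotient ${_2(Aut_S)_1}$ intertwines correctly with the torsor trivialization $\Theta_{\mathrm{tgt}}$ of the target, so that the computation collapses to the identity rather than to a nontrivial twisting automorphism of $Aut_{S'}$.
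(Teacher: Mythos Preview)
Your proposal is correct and is essentially the same approach as the paper's proof, only spelled out in full: the paper's proof consists of the two sentences ``It suffices to prove that $\bar\Phi$ is an isomorphism locally in the \'etale topology on $S$. The latter follows from the property (v),'' and your argument is a careful unpacking of exactly this, choosing an \'etale cover trivializing the torsors and verifying that after the trivializations of source (via (v)) and target, $\bar\Phi$ becomes the identity of $Aut_{S'}$.
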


\begin{proof}
It suffices to prove that $\bar \Phi$ is an isomorphism locally in the \'{e}tale topology on $S$.
The latter follows from the property (v).
\end{proof}

Now let $G_0$ and $Aut$ be as above. There is a closed embedding of algebraic groups
$\rho: Aut \hra GL_{V,k}$
for an $n$-dimensional $k$-vector space $V$.
Replacing
$\rho$ with $\rho \oplus det^{-1} \circ \rho$ we get a closed embedding of algebraic $k$-groups
$\rho_1: Aut \hra SL_{W,k}$,
where $W=V \oplus k$.
Let $End:=End_k(W)$.
Clearly, the composition
$in: Aut \xra{\rho_1} SL_{W,k} \hra End$
is a closed embedding.
We will identify
$Aut$ with its image in $End$.
Let $\overline {Aut}$ be the closure of $Aut$ in the projective space $\Pro(k \oplus End )$.
Set
$Aut_{\infty}:=\overline {Aut}- Aut$
regarded as a reduced scheme. So, we get a commutative diagram of $k$-varieties
\begin{equation}
\label{RactangelDiagram}
    \xymatrix{
Aut \ar[rr]^{j}\ar[d]^{in}&& \overline {Aut} \ar[d]^{\overline {in}}&& Aut_{\infty} \ar[ll]_{i}\ar[d]^{in_{\infty}}&\\
End \ar[rr]^{\text{\rm J}}&& \Pro(k \oplus End ) && \ar[ll]_{I} \Pro(End)  &\\  }
\end{equation}
where the left square is Cartesian. All varieties are equipped with the left
$Aut \times Aut$-action induced by
$Aut \times Aut$-action on the affine space $k \oplus End$
given by
$(g_1,g_2)(\alpha,c)= (c, g_1 \alpha g^{-1}_2)$.
All the arrows in this
diagram respect this action. Applying to this diagram the above construction
$Y \mapsto \ _{2}Y_1$,
we obtain a commutative diagram of $S$-schemes
\begin{equation}
\label{RactangelDiagram-S}
    \xymatrix{
_{2}Aut_1 \ar[rr]^{j}\ar[d]^{in}&& _{2}(\overline {Aut})_1 \ar[d]^{\overline {in}}&& _{2}(Aut_{\infty})_1 \ar[ll]_{i}\ar[d]^{in_{\infty}}&\\
_{2}End_1  \ar[rr]^{\text{\rm J}}&& \Pro(\mathcal O_S \oplus \ _{2}End_1 ) && \ar[ll]_{I} \Pro(_{2}End_1)  &\\  }
\end{equation}
where the square on the left is Cartesian.

From now on we assume that $S$ is a semi-local irreducible scheme. Then the vector bundle
$_{2}End_1$ is trivial.
Since it is trivial, we may choose homogeneous coordinates $Y_i$'s on
$\Pro(\mathcal O_S \oplus \ _{2}End_1)$
such that the closed subschemes
$\{Y_0=0 \}$
and
$\Pro(_{2}End_1)$
of the scheme
$\Pro(\mathcal O_S \oplus \ _{2}End_1)$
coincide and the $S$-scheme
$\Pro(\mathcal O_S \oplus \ _{2}End_1)$
itself is isomorphic to the projective space
$\Pro^{n^2}_S$. Thus the diagram
(\ref{RactangelDiagram-S})
of $S$-schemes and of $S$-scheme morphisms can be rewritten as follows
\begin{equation}
\label{Compactification}
    \xymatrix{
_{2}Aut_1 \ar[rr]^{j}\ar[d]^{in}&& _{2}(\overline {Aut})_1 \ar[d]^{\overline {in}}&& _{2}(Aut_{\infty})_1 \ar[ll]_{i}\ar[d]^{in_{\infty}}&\\
\{Y_0 \neq 0 \}   \ar[rr]^{\text{\rm J}}&& \Pro^{n^2}_S && \ar[ll]_{I} \{Y_0 = 0 \}  &\\  }
\end{equation}
where the square on the left is Cartesian.
Since
$_{2}(Aut_{\infty})_1= \ _{2}(\overline {Aut})_1 - \ _{2}Aut_1$,
the set-theoretic intersection
$_{2}(\overline {Aut})_1 \cap \{Y_0 = 0 \}$
in $\Pro^{n^2}_S$ coincides with
$_{2}(Aut_{\infty})_1$.

The following Lemma is the lemma~\cite[Lemma 7.2]{OjPa}.
\begin{lem}
\label{Lemma7_2}
Let $S=Spec(R)$ be a regular semi-local scheme and $T$ a closed
subscheme of $S$. Let $\bar X$ be a closed subscheme of
$\Pro^{N}_S= Proj(S[Y_0,\dots,Y_{N}])$ and
$X=\bar X\cap\Aff^{N}_S$,
where
$\Aff^{N}_S$
is the affine space defined by
$Y_0\neq0$. Let
$X_{\infty}=\bar X\setminus X$ be the intersection of $\bar X$ with the
hyperplane at infinity
$Y_0=0$. Assume further that
\begin{itemize}
\item[(1)]$X$ is smooth and equidimensional over $S$,
of relative dimension $r$.
\item[(2)]
For every closed
point $s\in S$ the closed fibres of $X_\infty$ and $X$
satisfy
$$\dim (X_\infty(s))< \dim (X(s))=r\;.$$
\item[(3)]
Over $T$ there exists a section $\delta:T\to X$
of the canonical projection $X\to S$.
\end{itemize}
Then there exists a closed subscheme $\tilde S$ of $X$ which is finite
\'etale
  over
$S$ and contains $\delta(T)$.
\end{lem}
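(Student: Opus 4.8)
The plan is to obtain $\tilde S$ as the intersection of $X$ with $r$ sufficiently general hypersurfaces that all pass through $\delta(T)$. Fix a large integer $d$, provided by Theorem~\ref{GeneralSection}, and seek hypersurfaces $H_1,\dots,H_r\subset\Aff^N_S$ of degree $d$, defined by polynomials $F_1,\dots,F_r\in R[Y_1,\dots,Y_N]$, each vanishing on $\delta(T)$; put $\tilde S:=X\cap H_1\cap\dots\cap H_r$. Vanishing of $F_i$ on $\delta(T)$ is a system of linear conditions on the coefficients of $F_i$ and forces $\delta(T)\subset\tilde S$ scheme-theoretically, so the containment demanded in the conclusion is automatic. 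Everything then reduces to choosing the $F_i$ so generally that $\tilde S\to S$ becomes finite and \'etale.

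To get finiteness I would exploit hypothesis~(2). Writing $\overline H_i\subset\Pro^N_S$ for the projective closure, one has $\overline{\tilde S}\subset\bar X\cap\overline H_1\cap\dots\cap\overline H_r$, whence $\overline{\tilde S}\cap\{Y_0=0\}\subset X_\infty\cap\overline H_1\cap\dots\cap\overline H_r$. For every closed point $s\in S$ the fibre $X_\infty(s)$ has dimension $\le r-1$, so its intersection with the $r$ general hyperplane-at-infinity traces of the $\overline H_i$ is empty by dimension count, and emptiness of this intersection is an open condition on the $\overline H_i$. Thus the proper $S$-scheme $\overline{\tilde S}\cap\{Y_0=0\}$ has empty fibres over all closed points of the semi-local $S$, hence is empty. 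Therefore $\overline{\tilde S}=\tilde S$ is at once proper over $S$ (closed in $\Pro^N_S$) and affine over $S$ (contained in $\Aff^N_S$), so $\tilde S\to S$ is finite; the analogous dimension count in $\Pro^N_S$ shows the fibres are non-empty, so the morphism is finite surjective.

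To verify that $\tilde S\to S$ is \'etale I would argue fibrewise and then spread out. The residue fields $k(s)$ are infinite (being finite extensions of the infinite ground field $k$), so Theorem~\ref{GeneralSection} applies with $V=V^{\prime}=X(s)$, smooth and equidimensional of dimension $r$ by~(1), and with base points the finite set $\delta(T)(s)$. For a sufficiently general degree-$d$ family through these points the theorem yields transversality of $H_1\cap\dots\cap H_r$ with $X(s)$ at every point of the intersection, including the base points; hence each fibre $\tilde S(s)$ is smooth of dimension $0$, i.e.\ finite \'etale over $k(s)$. Now $\tilde S$ is a codimension-$r$ complete intersection in the regular scheme $X$, hence Cohen--Macaulay; since $S$ is regular and the fibres have dimension $0$, miracle flatness makes $\tilde S\to S$ flat, and it is unramified because all its closed fibres are \'etale. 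The \'etale locus is open and contains every point of $\tilde S$ over a closed point of $S$, that is, every closed point of the semi-local scheme $\tilde S$; as open sets are stable under generization, $\tilde S\to S$ is \'etale everywhere. Being also finite, $\tilde S\to S$ is finite \'etale and contains $\delta(T)$, as required.

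The crux is the global, simultaneous choice of the $F_i\in R[Y_1,\dots,Y_N]$: I need coefficients in $R$ giving hypersurfaces through $\delta(T)$ whose reduction at each closed point $s_i$ lands at once in the transversality locus of the preceding paragraph and the emptiness-at-infinity locus of the second paragraph. Over each $k(s_i)$ both loci are non-empty and open by Theorem~\ref{GeneralSection} and the dimension counts above, and the constraint of passing through $\delta(T)$ only fixes the coefficients modulo the ideal of $T$, leaving --- for $d$ large --- enough freedom. I would then produce a single $F_i$ over $R$ whose residues at the finitely many closed points all lie in the good loci, using the surjectivity of $R\to\prod_i k(s_i)$ together with the density of these loci over the infinite residue fields. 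This semi-local version of the Bertini theorem~\ref{GeneralSection}, rather than any single fibrewise statement, is the real content, and is exactly where the semi-locality of $S$ enters.
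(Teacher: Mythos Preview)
The paper does not prove this lemma: it simply quotes it as \cite[Lemma 7.2]{OjPa} and uses it as a black box. So there is no ``paper's own proof'' to compare against. Your strategy --- cut $X$ by $r$ sufficiently general hypersurfaces through $\delta(T)$, use the dimension bound~(2) to kill the part at infinity and get finiteness, then invoke fibrewise Bertini plus miracle flatness to get \'etaleness, and finally patch the fibrewise choices via the surjection $R\twoheadrightarrow\prod_i k(s_i)$ --- is exactly the approach of Ojanguren--Panin, and your outline is correct.

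Two remarks. First, you tacitly assume the residue fields $k(s_i)$ are infinite when you appeal to Theorem~\ref{GeneralSection}; the lemma as stated carries no such hypothesis, but in the ambient paper everything lives over an infinite field $k$, so this is harmless in context. Second, the step you flag as ``the crux'' deserves one more sentence of justification: writing $W\subset R[Y_1,\dots,Y_N]_{\le d}$ for the $R$-submodule of polynomials vanishing on $\delta(T)$, one has a short exact sequence $0\to W\to R[Y]_{\le d}\to R/J\to 0$ (the evaluation map is surjective already on constants), and tensoring with $k(s_i)$ shows $W\otimes_R k(s_i)$ surjects onto the corresponding space $W_i$ over $k(s_i)$; since $W$ is finitely generated and $R$ semi-local, $W\twoheadrightarrow\prod_i W_i$, so any tuple of fibrewise-generic choices lifts to $R$. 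With that said, your argument is complete.
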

The diagram
(\ref{Compactification})
shows that the $S$-schemes
$X= \ _{2}Aut_1$,
$\bar X = \ _{2}(\overline {Aut})_1$
and
$X_{\infty}= \ _{2}(Aut_{\infty})_1$
satisfy all the hypotheses of Lemma
\ref{Lemma7_2}
except possibly the conditions (2) and (3). To check (2), observe that the diagram of $S$-schemes
\begin{equation}
\label{RactangelDiagram}
    \xymatrix{
_{2}Aut_1  \ar[rr]^{\text{\rm j}}&& _{2}(\overline {Aut})_1  && \ar[ll]_{i} \ _{2}(Aut_{\infty})_1  &\\  }
\end{equation}
locally in the \'{e}tale topology on $S$ is isomorphic to the diagram of $S$-schemes
\begin{equation}
\label{RactangelDiagram}
    \xymatrix{
Aut \times S \ar[rr]^{\text{\rm j}}&& (\overline {Aut}) \times S  && \ar[ll]_{i} (Aut_{\infty}) \times S.  &\\  }
\end{equation}
This follows from the property (v) of the construction $Z \to {}_{2}Z_1$.
Since $Aut$ is equidimensional and $\overline {Aut}$ is the closure of $Aut$ in $\Pro(End \oplus k)$, one has
$$\dim (Aut_{\infty}) < \dim (\overline {Aut})= \dim Aut.$$
Thus the assumption (2) of Lemma~\ref{Lemma7_2}
is fulfilled. Whence we have proved the following

\begin{lem}
\label{tildeS}
Assume $S$ is a regular semi-local irreducible scheme and assume we are given with
a closed subscheme $T \subset S$ equipped with a section
$\delta: T \to \ _{2}Aut_1 $
of the structure map
$_{2}Aut_1 \to S$. Then there exists a closed subscheme
$\tilde S$ of $_{2}Aut_1$
which is finite and \'{e}tale over $S$ and contains $\delta(T)$.
\end{lem}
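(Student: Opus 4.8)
The plan is to deduce the Lemma as a direct application of Lemma~\ref{Lemma7_2} to the triple of $S$-schemes $X = {}_{2}Aut_1$, $\bar X = {}_{2}(\overline{Aut})_1$ and $X_{\infty} = {}_{2}(Aut_{\infty})_1$ sitting inside $\Pro^{n^2}_S$ as in diagram~(\ref{Compactification}). There the left-hand square is Cartesian, so $X = \bar X \cap \{Y_0 \neq 0\}$ is the affine part, $X_{\infty} = \bar X \cap \{Y_0 = 0\}$ is the part at infinity, and $X = \bar X \setminus X_{\infty}$. Hence the only work is to verify the three hypotheses of Lemma~\ref{Lemma7_2}, after which its conclusion --- a closed subscheme $\tilde S \subset X = {}_{2}Aut_1$, finite \'etale over $S$ and containing $\delta(T)$ --- is precisely the assertion of the present Lemma.

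First I would check hypothesis~(1), that $X$ is smooth and equidimensional over $S$. By property~(v) of the construction $Y \mapsto {}_{2}Y_1$, the $S$-scheme $X = {}_{2}Aut_1$ becomes isomorphic to $Aut \times S$ after an \'etale (in particular faithfully flat) base change on $S$. Since $Aut = Aut_k(G_0)$ is a smooth affine $k$-group --- a semi-direct product of $G_0^{\ad}$ with a finite group --- it is $k$-smooth and equidimensional of dimension $r := \dim Aut = \dim G_0^{\ad}$. Smoothness and equidimensionality of a relative scheme are \'etale-local on the base, so $X$ is smooth and equidimensional over $S$ of relative dimension $r$, which gives~(1).

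Next, for hypothesis~(2), I would argue fibrewise using the same \'etale-local description, which is exactly what the discussion preceding the statement already records. For a closed point $s \in S$ the fibres $X(s)$ and $X_{\infty}(s)$ are, after a faithfully flat base change, the base changes of $Aut$ and $Aut_{\infty}$ to an extension of the residue field, and dimension is insensitive to such a base change. Since $\overline{Aut}$ is by construction the closure of $Aut$ in $\Pro(k \oplus End)$ and $Aut_{\infty} = \overline{Aut} - Aut$ is its boundary, one has $\dim Aut_{\infty} < \dim \overline{Aut} = \dim Aut = r$, whence $\dim X_{\infty}(s) < \dim X(s) = r$, establishing~(2). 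Hypothesis~(3) is immediate, since the section $\delta : T \to {}_{2}Aut_1 = X$ is part of the given data.

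The proof is then completed by invoking Lemma~\ref{Lemma7_2}. The only place where anything delicate happens --- and thus the step I expect to be the main obstacle --- is the transfer of smoothness, equidimensionality, and the fibre-dimension estimate from $Aut$ to $X = {}_{2}Aut_1$; everything rests on property~(v), that is, on the fact that the functor ${}_{2}(-)_1$ is \'etale-locally on $S$ the constant base change $(-) \times S$. Once that descent input is granted, all three hypotheses of Lemma~\ref{Lemma7_2} follow essentially formally, and the existence of the required finite \'etale $\tilde S \subset {}_{2}Aut_1$ containing $\delta(T)$ is immediate.
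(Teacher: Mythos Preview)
Your proposal is correct and follows exactly the route the paper takes: the paper's argument, given in the paragraph immediately preceding the Lemma, consists precisely of applying Lemma~\ref{Lemma7_2} to $X={}_{2}Aut_1$, $\bar X={}_{2}(\overline{Aut})_1$, $X_\infty={}_{2}(Aut_\infty)_1$ inside $\Pro^{n^2}_S$ via diagram~(\ref{Compactification}), invoking property~(v) to verify the fibre-dimension inequality~(2), and reading off~(3) from the hypothesis. Your write-up is in fact slightly more explicit than the paper in spelling out why~(1) holds, but the approach is identical.
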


\begin{proof}[Proof of Proposition \ref{PropEquatingGroups}]
By Lemma
\ref{IsomG_1G_2}
the $S$-schemes
$Isom_S(G_1,G_2)$
and
$_{2}Aut_1$
are naturally isomorphic as $S$-schemes.
The isomorphism
$\varphi$
from the hypotheses of the Proposition
\ref{PropEquatingGroups}
determines a section
$\delta: T \to Isom_S(G_1,G_2)= \ _{2}Aut_1$
of the structure map
$Isom_S(G_1,G_2)= \ _{2}Aut_1 \to S$.
By Lemma
\ref{tildeS}
there exists a closed subscheme $\tilde S$ of
$_{2}Aut_1= Isom_S(G_1,G_2)$
which is finite \'{e}tale over $S$ and contains $\delta(T)$.
So, we have morphisms (even closed inclusions) of $S$-schemes
\begin{equation}
\label{D}
    \xymatrix{
    T \ar[rrd]_-{i} \ar[rr]^-{\delta}&& \tilde S \ar[rr]^-{} \ar[d]^-{\pi}  && Isom_S(G_1,G_2)  \ar[lld]^-{} \\
    && S  &&.  \\
    }
\end{equation}
Thus we get an isomorphism
$\Phi: \pi^*(G_1) \to \pi^*(G_2)$
such that
$\delta^{*}(\Phi)=\varphi$.

\end{proof}

\section{Horrocks type theorem}
In this section we give a proof of Theorem 8.6 of~\cite{PaSV}.

In the Theorem, let $k$ be an infinite field, $\mathcal O$ be the semi-local ring of finitely many points on
a $k$-smooth affine scheme $X$;
$x= \{x_1,x_2,\dots,x_n\}  \subset Spec(\mathcal O)$
be the set of all closed points,
$l=\prod^r_{i=1}k(x_i)$.
%be the residue field at $x$,
Let $G$ be a simple
simply-connected $\mathcal O$-group scheme,
$G_l = G \otimes_{\mathcal O} l$, \ $\Pro^1:=\Pro^1_{\mathcal O}$, $\Aff^1:= \Aff^1_{\mathcal O}$.

%Let $k$, $\mathcal O$, $x \in Spec(\mathcal O)$, $l$, $G$, $G_l$, $\Pro^1:=\Pro^1_{\mathcal O}$, $\Aff^1_{\mathcal O}$
%be the same as in that Theorem.

\begin{thm}
\label{ConstantOnFibreIsConstant} Let $G$ be an simple
simply connected $\mathcal O$-group scheme. Let $E$ be a principal
$G$-bundle over $\Pro^1$ whose restriction to each closed fibre is trivial, that is
$E_{\Pro^1_l}$ is trivial over $\Pro^1_l$. Then $E$ is of the form:
$E=\pr^*(E_0)$, where $E_0$ is a principal $G$-bundle over
$\Spec(\mathcal O)$ and $\pr:\Pro^1\to\Spec(\mathcal O)$ is the
canonical projection.
\end{thm}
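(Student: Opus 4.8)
The plan is to prove the theorem by the classical patching argument over $\Pro^1$, adapted to the semi-local base $\mathcal O$. Cover $\Pro^1=\Pro^1_{\mathcal O}$ by the two standard affine charts $U_0=\Spec \mathcal O[t]$ and $U_\infty=\Spec \mathcal O[t^{-1}]$, with overlap the relative torus $\Gm\times\Spec\mathcal O=\Spec\mathcal O[t,t^{-1}]$. A bundle of the form $\pr^*(E_0)$ is exactly one that can be obtained by gluing two trivial torsors over $U_0$ and $U_\infty$ along a \emph{$t$-independent} transition $g_0\in G(\mathcal O)$; so the whole statement reduces to producing trivializations of $E$ over the two charts for which the resulting transition cocycle $g\in G(\mathcal O[t,t^{-1}])$ admits a Birkhoff-type factorization $g=g_-\,g_0\,g_+$ with $g_+\in G(\mathcal O[t])$, $g_-\in G(\mathcal O[t^{-1}])$ and $g_0\in G(\mathcal O)$. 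Granting such a factorization, one absorbs $g_+$ into the $U_0$-trivialization and $g_-$ into the $U_\infty$-trivialization, is left with the constant transition $g_0$, and recognizes the glued object as $\pr^*(E_0)$ with $E_0\cong\sigma_0^*E$ the restriction to the zero section.

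First I would trivialize $E$ over each chart. Over $U_0=\Aff^1_{\mathcal O}$ the restriction $E|_{U_0}$ is a $G$-torsor whose restriction to the closed fibre $\Aff^1_l$ is trivial, since $E|_{\Pro^1_l}$ is trivial by hypothesis and $\Aff^1_l\subset\Pro^1_l$; likewise over $U_\infty$. To pass from triviality over $\Aff^1_l$ to triviality over the chart I would use the smoothness of $G$ together with a successive-approximation argument along the powers of the Jacobson radical of $\mathcal O$ and the degree filtration in $t$, the linear steps being controlled by $\Lie(G)$; over the closed points the crucial input is that $E$ is already trivial there, not merely extended.

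The main obstacle is the Birkhoff factorization over the semi-local ring. Over each residue field $k(x_i)$ the triviality of $E$ makes the transition $\bar g$ a coboundary, $\bar g=\bar g_-\bar g_+$, by the Birkhoff--Grothendieck decomposition for loop groups of a reductive group over a field; so over $l=\prod_i k(x_i)$ we obtain such a factorization with trivial constant part. The difficulty is to lift it to $\mathcal O$. I would argue by induction on the spread of degrees occurring in the Laurent polynomial $g$, at each step removing the highest (or lowest) degree term by multiplying by a suitable element of $G(\mathcal O[t])$ or $G(\mathcal O[t^{-1}])$; the solvability of each step is a linear problem with values in $\Lie(G)$, governed by the root/weight decomposition, where smoothness of $G$ and Nakayama over $\mathcal O$ ensure that a solution existing modulo the radical lifts. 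The genuinely hard point is that in the generally anisotropic-over-$\mathcal O$ situation $G$ need not contain nontrivial split tori or unipotent subgroups, so the reduction must be carried out intrinsically via the loop-group decomposition rather than by elementary (unipotent) transformations; the simple simply connectedness and the splitting of $G$ over $\bar k$ enter here to guarantee that the needed decomposition survives base change and deformation.

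Once $g=g_-g_0g_+$ is in hand, the gluing of two trivial torsors over $U_0$ and $U_\infty$ by the constant $g_0\in G(\mathcal O)$ is, by construction, the pullback under $\pr:\Pro^1\to\Spec\mathcal O$ of the $G$-torsor $E_0$ over $\Spec\mathcal O$ glued from two trivial torsors by $g_0$. Hence $E\cong\pr^*(E_0)$, and restricting to the zero section identifies $E_0$ with $\sigma_0^*E$, completing the proof.
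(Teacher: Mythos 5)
There is a genuine gap --- in fact two, and together they amount to assuming the hard content of the theorem. First, your trivialization of $E$ over the charts $U_0=\Spec\mathcal O[t]$ and $U_\infty$: successive approximation along powers of the Jacobson radical of $\mathcal O$ only produces trivializations modulo each power of the radical, and since $\mathcal O$ is merely semi-local regular (not complete or Henselian) this does not converge to a trivialization over $\mathcal O[t]$. The statement ``a $G$-torsor on $\Aff^1_{\mathcal O}$ that is trivial on $\Aff^1_l$ is trivial'' is itself a Horrocks/Quillen--Suslin type assertion for torsors under a twisted group scheme, essentially as deep as the theorem you are proving. Second, the Birkhoff factorization $g=g_-g_0g_+$ in $G(\mathcal O[t,t^{-1}])$: the degree-lowering mechanism you describe (``a linear problem with values in $\Lie(G)$, governed by the root/weight decomposition'') is exactly the elementary-matrix argument behind Horrocks' theorem for $GL_n$, and it requires isotropy (a root decomposition over $\mathcal O$). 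You correctly flag that $G$ may be anisotropic over the residue fields, but the proposed remedy --- carrying out the reduction ``intrinsically via the loop-group decomposition'' --- is not an argument; no Birkhoff decomposition is available for an arbitrary twisted anisotropic $G$ over a semi-local base, and the Nakayama-lifting step suffers from the same non-completeness problem as above.

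The paper avoids both obstacles by never attempting to trivialize $E$ on the charts or to factor a cocycle for $G$ itself. Instead it passes to a finite \'etale cover $V\to U$ over which $G$ becomes inner, twists the product of the fundamental representations into a closed embedding $G_V\hra H=\prod_\lambda GL_{1,A_\lambda}$ into units of Azumaya algebras, and then into $GL_{\oplus A_\lambda}$; the classical Horrocks theorem for \emph{vector bundles} on $\Pro^1$ over a semi-local ring does the work there. The descent from ``$j_*(F)$ trivial'' back to ``$F$ extended'' is handled by Lemma~\ref{Horrocks}, whose key point is that the quotient $H'/H$ is an \emph{affine} $W$-scheme (Haboush--Nagata), so any $W$-morphism $\Pro^1_W\to H'/H$ is constant and the connecting map in nonabelian cohomology shows $F$ comes from the base. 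Applying this twice (once over $V$ with Hilbert 90 for Azumaya algebras to kill $F_0$, once over $U$ with $j=\rho_U$) yields $E=\pr^*(E_0)$. If you want to salvage your approach, you would need to first establish these reductions to $GL_n$; as written, the two central steps of your argument are unproven and do not follow from the tools you invoke.
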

The proof of this Theorem is rather standard,
and for the most part follows \cite{R1}. However, our group scheme
$G$ does not come from the ground field $k$. Therefore, we have to
somewhat modify Raghunathan's arguments. We will use the following lemma.

\begin{lem}
\label{Horrocks}
Let $W$ be a semi-local irreducible Noetherian scheme over an arbitrary field $k$.
% and let $w_1,w_2, \dots, w_n$ be all its closed points.
Let $H$ and $H'$ be two reductive group schemes over $W$, such that $H$ is a closed $W$-subgroup scheme of $H'$,
and denote by $j: H \hra H^{\prime}$ the corresponding embedding. Denote by $\Pro^1_W$ the projective line over
$W$.

Let $F \in H^1(\Pro^1_W, H)$
be a principal $H$-bundle, and let
$M :=j_*(F) \in H^1(\Pro^1_W, H^{\prime})$
be the corresponding principal $H^{\prime}$-bundle.
If $M$ is a trivial $H^{\prime}$-bundle, then there exists a principal
$H$-bundle $F_0$ over $W$ such that
$pr^*(F_0) \cong F$,
where
$pr: \Pro^1_W \to W$
is the canonical projection.
\end{lem}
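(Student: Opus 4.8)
The plan is to translate the triviality of $M=j_*(F)$ into the existence of a reduction section over $\Pro^1_W$, then to show that this section factors through the base $W$ because the homogeneous space $H'/H$ is affine over $W$, and finally to pull the whole picture back along $pr$. The single nonformal input I would isolate and treat as the main obstacle is the affineness of the quotient.

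First I would record the structural input. Since $H$ is a reductive closed $W$-subgroup scheme of the reductive $W$-group scheme $H'$, both are smooth and affine over $W$; the fppf quotient $X:=H'/H$ exists as a $W$-scheme, the projection $\pi\colon H'\to X$ is a (right) principal $H$-bundle, and, crucially, $X$ is affine over $W$. This last point is a relative form of Matsushima's criterion: the quotient of a reductive group scheme by a reductive closed subgroup scheme is affine. I would write $X=\Spec_W(\mathcal A)$ for a quasi-coherent sheaf of $\mathcal O_W$-algebras $\mathcal A$. The hard part of the whole argument lives here: affineness of $H'/H$ is the only input that is not formal, and over a general base it must be justified by a relative version of Matsushima's theorem (equivalently, by the geometric reductivity of $H$); everything else is bookkeeping.

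Next I would set up the reduction–section dictionary. The hypothesis that $M=j_*(F)$ is trivial says precisely that $F$ is a reduction of the structure group of the trivial $H'$-bundle $\Pro^1_W\times_W H'$ from $H'$ to $H$ along $j$. Such reductions are in natural bijection with global sections of the associated fibre bundle $M\times^{H'}X\to \Pro^1_W$; as $M$ is trivial this associated bundle is $\Pro^1_W\times_W X$, so $F$ corresponds to a $W$-morphism $s\colon \Pro^1_W\to X$. Under this correspondence $F$ is recovered as the pullback $s^*\pi$ of the tautological $H$-bundle $\pi\colon H'\to X$. (This step uses only that $\pi$ is an $H$-torsor fppf-locally trivial over the scheme $X$, which holds since $H$ is smooth.)

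Finally I would factor $s$ through the base and conclude. Because $pr\colon \Pro^1_W\to W$ is flat and proper with geometrically integral one-dimensional fibres, one has $pr_*\mathcal O_{\Pro^1_W}=\mathcal O_W$. Since $X=\Spec_W(\mathcal A)$ is affine over $W$, a $W$-morphism $s\colon \Pro^1_W\to X$ is the same datum as an $\mathcal O_W$-algebra homomorphism $\mathcal A\to pr_*\mathcal O_{\Pro^1_W}=\mathcal O_W$, that is, as a section $s_0\colon W\to X$, and this identification forces $s=s_0\circ pr$. Setting $F_0:=s_0^*\pi$, a principal $H$-bundle over $W$, I then obtain
$$F\cong s^*\pi=(s_0\circ pr)^*\pi=pr^*(s_0^*\pi)=pr^*(F_0),$$
which is exactly the assertion of the Lemma.
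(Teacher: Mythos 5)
Your argument is correct and is essentially the paper's own proof: both hinge on the affineness of $X=H'/H$ over $W$ (the paper deduces it \'etale-locally from the split case via Haboush--Nagata--Nisnevich, you invoke a relative Matsushima criterion, which is the same input), on identifying the trivialization of $j_*F$ with a $W$-morphism $\Pro^1_W\to X$ that recovers $F$, and on factoring that morphism through $W$ because its target is affine over $W$. The only difference is presentational: the paper runs the middle step through the boundary map of the long exact sequence of nonabelian cohomology and its naturality under $pr^*$, where you use the torsor-reduction dictionary and pullback of the tautological $H$-bundle $H'\to X$.
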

\begin{proof}
%It remains to prove Lemma \ref{Horrocks}.
Set $X= H^{\prime}/j(H)$. Locally in the \'{e}tale topology on $W$
this scheme is isomorphic to the $W$-scheme $W \times_{\Spec(k)} H^{\prime}_{0,k}/H_{0,k}$, where
$H_{0,k}$ and $H^{\prime}_{0,k}$ are
the split reductive $k$-group schemes of the same types as $H$ and $H^{\prime}$ respectively.
By results of Haboush
\cite{Hab}
and Nagata
\cite{Na}
(see Nisnevich \cite[Corollary]{Ni})
the $k$-scheme
$H^{\prime}_{0,k}/H_{0,k}$
is an affine $k$-scheme. Thus $X$ is an affine $W$-scheme.
Consider the long exact sequence of pointed sets
$$1 \to H(\Pro^1_W) \xra{j_*} H^{\prime}(\Pro^1_W) \to X(\Pro^1_W)
\xra{\partial} H^1_{\text{\'{e}t}}(\Pro^1_W, H) \xra{j_*} H^1_{\text{\'{e}t}}(\Pro^1_W, H^{\prime}).$$
Since $j_*(F)$ is trivial, there is $\varphi \in X(\Pro^1_W)$ such that
$\partial (\varphi)= F$.

The $W$-morphism $\varphi: \Pro^1_W \to X$ is a $W$-morphism of a $W$-projective scheme to a $W$-affine scheme.
Thus $\varphi$ is "constant", that is, there exists a section $s: W \to X$ such that
$\varphi= s \circ pr$.
Consider another long exact sequence of pointed sets, this time the one corresponding to the scheme $W$,
and the morphism of the first sequence to the second one induced by the projection $pr$. We get a big commutative diagram.
In particular, we get the following commutative square
\begin{equation}
\label{Representation1}
    \xymatrix{
X(W) \ar[d]_{pr^*_W} \ar[rr]^{\partial} && H^1_{\text{\'{e}t}}(W, H) \ar[d]^{pr^*_W} \\
X(\Pro^1_W) \ar[rr]^{\partial} && H^1_{\text{\'{e}t}}(\Pro^1_W, H).       \\  }
\end{equation}
We have $\pr^*_W(s)=\varphi$. Hence
$$F= \partial (\varphi) = \partial (\pr^*_W(s)) = pr^*_W ( \partial (s)).$$
Setting $F_0=\partial (s)$ we see that
$F= pr^*_W ( F_0)$. The Lemma is proved.
\end{proof}

\begin{proof}[Proof of Theorem~\ref{ConstantOnFibreIsConstant}]
Let $U=Spec(\mathcal O)$. The $U$-group scheme $G$ is given by a $1$-cocycle
$\xi \in Z^1(U, Aut)$, where $Aut$ is the $k$-algebraic group from Section
\ref{EquatingGroups}
(the automorphism group of the split group $G_0$ from that Section).
Recall that
$Aut \cong G^{\ad}_0 \rtimes N$,
where $N$ is the finite group of automorphisms of the Dynkin diagram of $G_0$,
and $G^{\ad}_0$
is the adjoint group corresponding to $G_0$.
Since $Aut \cong G^{\ad}_0 \rtimes N$,
we have an exact sequence of pointed sets
$$\{1\} \to H^1(U, G^{\ad}_0) \to H^1(U, G^{\ad}_0 \rtimes N) \to H^1(U,N).$$
Thus there is a finite \'{e}tale morphism
$\pi: V \to U$ such that
$G_V:=G \times_U V$
is given by a
$1$-cocycle $\xi_V \in Z^1(U, G^{\ad}_0)$.

For each fundamental weight $\lambda$ of $G_0$, there is a central (also called center preserving, see~\cite{PS-tind})
representation
$\rho_{\lambda}: G_0 \to GL_{V_{\lambda},k}$,
where $V_{\lambda}$ is the Weyl module corresponding to $\lambda$.
This gives a commutative diagram of $k$-group morphisms
\begin{equation}
\label{Representation1}
    \xymatrix{
G_0 \ar[rr]^{\rho_{\lambda}} \ar[d]_{} && GL_{V_{\lambda},k} \ar[d]^{}& \\
G^{\ad}_0 \ar[rr]^{\bar \rho_{\lambda}} && PGL_{V_{\lambda},k}.      & \\  }
\end{equation}
Considering the product of $\rho_{\lambda}$'s with $\lambda$ running over the set $\Lambda$ of all fundamental
weights, we obtain the following commutative diagram
of algebraic $k$-group homomorphisms:
\begin{equation}
\label{Representation2}
    \xymatrix{
G_0 \ar[rr]^{\rho} \ar[d]_{} && \prod_{\lambda \in \Lambda}GL_{V_{\lambda},k} \ar[d]^{}& \\
G^{\ad}_0 \ar[rr]^{\bar \rho} && \prod_{\lambda \in \Lambda}PGL_{V_{\lambda},k}.      & \\  }
\end{equation}
Note that $\rho$ is a closed embedding. This fact will be used below.

Twisting the $k$-group morphism $\rho$ with the $1$-cocycle $\xi_V$ we get
an $V$-group scheme morphism
$\rho_V: G_V \to \prod_{\lambda \in \Lambda} GL_{1, A_{\lambda}}$,
where the product is taken over $V$, and each
$A_{\lambda}$
is an Azumaya algebra over $V$ obtained from
$End(V_{\lambda})$
via the $1$-cocycle
$\theta_{\lambda}=(\bar \rho_\lambda)_*(\xi_V) \in Z^1(V, PGL_{V_{\lambda}})$.
Set
$H=\prod_{\lambda \in \Lambda} GL_{1, A_{\lambda}}$.
%where the product is taken over $V$ as well.
One has
$$Hom_V(G_V,H)=Hom_U(G,R_{V/U}(H)),$$
where $R_{V/U}$ is the Weil restriction functor. Thus $\rho_V$ determines
an $U$-morphism
$$\rho_U: G \hra R_{V/U}(H).$$
Well-known properties of the functor $R_{V/U}$ imply that $\rho_U$ is a $U$-group scheme morphism.
Note that, since $\rho$ is a closed embedding, $\rho_U$ is a closed embedding as well
(by \'{e}tale descent).

Recall that we are given with a principal $G$-bundle $E \in H^1(\Pro^1, G)$.
The map
$$(\rho_U)_*: H^1(\Pro^1, G) \to H^1(\Pro^1, R_{V/U}(H))
$$
produces a principal
$R_{V/U}(H)$-bundle
$(\rho_U)_*(E)$
over $\Pro^1$.

By the Shapiro---Faddeev Lemma (for example,~\cite[Exp. XXIV Prop. 8.4]{SGA3}),
there is an isomorphism of pointed sets
\begin{equation}
\label{Faddeev}
    \xymatrix{
H^1(\Pro^1, R_{V/U}(H))\cong H^1(\Pro^1 \times_U V, H).
}
\end{equation}
Let $F$ be a principal $H$-bundle over $\Pro^1 \times_U V$ corresponding to the bundle
$(\rho_U)_*(E)$
via the latter isomorphism.
Recall that
$H= \prod_{\lambda \in \Lambda} GL_{1, A_{\lambda}}$.
Consider the compositions of the closed embeddings
$$j: H \hra \prod_{\lambda \in \Lambda} GL_{A_{\lambda}} \hra GL_{\oplus A_{\lambda}},$$
where
$GL_{A_{\lambda}}$ (resp. $GL_{\oplus A_{\lambda}}$)
is the $V$-group scheme of automorphisms of
$A_{\lambda}$ (resp. $\oplus A_{\lambda}$)
regarded as an $\mathcal O_V$-module.

Recall that the original $G$-bundle $E$ is trivial on
$\Pro^1_l= \Pro^1 \times_U x \subset \Pro^1$.
It follows that the $R_{V/U}(H)$-bundle $(\rho_U)_*(E)$ is trivial on
$\Pro^1 \times_U x \subset \Pro^1$.
Hence the
$H$-bundle $F$ is trivial on
$\Pro^1 \times_U \pi^{-1}(x)\subseteq \Pro^1 \times_U V$.
Hence the
$GL_{\oplus A_{\lambda}}$-bundle
$j_*(F)$ is trivial on
$\Pro^1 \times_U \pi^{-1}(x)$.
The $V$-group scheme
$GL_{\oplus A_{\lambda}}$
is just the ordinary general linear group, since $V$ is semi-local.
Thus $j_*(F)$ corresponds to a vector bundle $M$ over $\Pro^1 \times_U V$.
Moreover, this vector bundle is trivial on
$\Pro^1 \times_U \pi^{-1}(x)$.
By Horrocks' theorem the bundle $M$ is trivial on $\Pro^1 \times_U V$.
Thus
$j_*(F)$ is a trivial $GL_{\oplus A_{\lambda}}$-bundle.

Now, applying Lemma~\ref{Horrocks} to the embedding
$j:H \hra GL_{\oplus A_{\lambda}}$,
we see that $F=pr^*(F_0)$ for some
$F_0 \in H^1(V, H)$. Since
$H=\prod_{\lambda \in \Lambda} GL_{1, A_{\lambda}}$
and $V$ is semi-local, the bundle $F_0$ is trivial by Hilbert 90 for Azumaya algebras.
Whence $F$ is trivial as well.
The isomorphism (\ref{Faddeev})
shows that
$(\rho_U)_*(E)$
is trivial too.
Now, applying Lemma~\ref{Horrocks} to $W=U$, $j=\rho_U: G \hra R_{V/U}(H)$ and $F=E$,
we see that there exists a principal $G$-bundle $E_0$ over $U$ such that
$pr^*_U(E_0)\cong E$. The theorem is proved.

\end{proof}

\end{document}